\def\newpic#1{}
\def\mtwo#1#2{\raise .8ex\hbox{%         % You can change 1
		${#1_{{\displaystyle #2}}}$}}
\DeclareSymbolFont{extraup}{U}{zavm}{m}{n}
\DeclareMathSymbol{\varheart}{\mathalpha}{extraup}{86}
\DeclareMathSymbol{\vardiamond}{\mathalpha}{extraup}{87}
\DeclareMathSymbol{\varclub}{\mathalpha}{extraup}{88}
\DeclareMathSymbol{\varspade}{\mathalpha}{extraup}{85}
\definecolor{light-gray}{gray}{0.9}
\newtheorem{conjecture}{Conjecture}
\newtheorem{lemma}{Lemma}
\newtheorem{proposition}{Proposition}
\newtheorem{remark}{Remark}
\newtheorem{definitiona}{Definition}
\newenvironment{proof}{{\bf Proof.}}{\hspace*{1mm}\hfill\rule{2mm}{2mm}}
\newtheorem{pretheorema}{{\bf Theorem}}
\newenvironment{theorema}[1]{\begin{pretheorema}
		{\hspace{-0.2em}{\rm #1}{\bf.}}}{\end{pretheorema}}
\newtheorem{prelemmab}{{\bf Lemma}}
\newtheorem{prepropositiona}{{\bf Proposition}}
\def\n#1{\vbox to 3mm{\vspace{1mm}\vfill \hbox to 2.0mm{\hfill
			$#1$\hfill} \vfill }}
\def\m#1#2{\raise 0.2ex\hbox{% % You can change 0.2
		${#1_{\bf \displaystyle #2}}$}}
\def\x#1{\raise 0.5ex\hbox{% % You can change 0.5
		${#1}$}}
\title{\bf On decomposing complete tripartite graphs into 5-cycles}
\author{
	M. Abdolmaleki\thanks{Department of Mathematical Sciences, Sharif University of Technology,
		Tehran, I. R. Iran},
	S. Gh. Ilchi\thanks{Department of Computer Engineering,
		Sharif University of Technology,
		Tehran, I. R. Iran},
	E.S. Mahmoodian\footnotemark[1],
	and
	MA. Shabani\footnotemark[1]}
\date{}%\today}
\begin{document}
	%\huge{
	\maketitle  %Draft:   {\bf\Large  5-Cycles\_Ver28.tex}
	
	%%%%%%%%%%%%%%%%%%%%%%%%%                                     %%%%%%%%%%
	%%%%%%%%%%%%%%%%%%%%%%%%%%         Ver    N-27              %%%%%%%%%%
	%%%%%%%%%%%%%%%%%%%%%%%%%%                                     %%%%%%%%%%
	%%%%%%%%%%%%%%%%%%%%%%%%%%%%%%%%%%%%%%%%%%%%%%%%%%%
	\begin{abstract}
		The problem of finding necessary and sufficient conditions to decompose a complete tripartite graph $K_{r,s,t}$ into 5-cycles was first considered by E.S. Mahmoodian and Maryam Mirzakhani (1995). They stated some necessary conditions and conjectured that those conditions are also sufficient. Since then, many cases of the problem have been solved by various authors; however, the case when the partite sets $r\leq s\leq t$ have odd and distinct sizes remained open. We show the conjecture is true when $r$, $s$ and $t$ are all multiples of 5, $t+90 \leq \frac{4rs}{r+s}$, and $t \neq s+10$.
	\end{abstract}
	%\keywords{Graph, 5-cycles}
	\maketitle
	\section{Introduction}
	%%%%%%%%%%%%%%%%%%%%%%%%%%%%%%%%%%%%%%%%%%%%%%%%%%%
	%%%%%%%%%%%%%%%%% review copy %%%%%%%%%%%%%%%%%%%%%%
	\if0
	An $m$-cycle decomposition of a graph $G$ is a partition of $E(G)$ into $m$-cycles. Various results have been obtained concerning $m$-cycle decompositions of complete graphs or complete multipartite graphs. This paper considers the existence of 5-cycle decomposition of a complete tripartite graph $K_{r,s,t}$, where $r \leq s \leq t$ are the partite sizes. This problem was first posed by E. S. Mahmoodian and M. Mirzakhani in 1995 and it was conjectured that the necessary conditions (a) $r,s,t$ are all even or all odd, (b) $5 \mid (rs+rt+st)$ and (c) $t\leq \frac{4rs}{(t+s)}$ are also sufficient. The cases when two partite sets have an equal size or when all partite sets have even size were settled in \cite{MR1795321,MR1927056}. The remaining cases are when all parts have odd size and no two have the same size. Billington and Cavenagh \cite{MR2855067} showed that when $r$, $s$ and $t$ are all odd and $r \leq s \leq κt$, where $\kappa \approx 1.0806$, then the conjectured necessary conditions for decomposing are sufficient. The paper \cite{MR3013258} improves this result further to the cases where $\kappa \approx 53$.\\
	\fi
	%%%%%%%%%%%%%%%%%%%%%%%%%%%%%%%%%%%%%%%%%%%%%%%%%%%
	An $m$-cycle decomposition of a graph $G$ is a partition of $E(G)$ into $m$-cycles. Various results have been obtained concerning $m$-cycle decompositions of complete graphs or complete multipartite graphs. The problem of decomposing complete $k$-partite graphs into $m$-cycles when $k=2$ is completely solved by Sotteau in \cite{MR609596}. Also in \cite{MR1010577}, it is shown that a complete $k$-partite graph with each part of size $m$ decomposes into $m$-cycles if and only if both $k$ and $m$ are odd. In case $k=3$, it is easy to solve the problem for $m=3$, Mahmoodian and Mirzakhani \cite{MR1366852}  introduced some necessary conditions for $m=5$ and showed that these conditions are sufficient for some cases.
	In the following theorem Mahmoodian and Mirzakhani have shown the necessary conditions for decomposing $K_{r,s,t}$ into 5-cycles. 
	{\begin{theorema}{\rm \cite{MR1366852}}
			\label{trmA}
			Suppose $r\leq s\leq t$ and $K_{r,s,t}$ can be decomposed into 5-cycles. Then the following conditions must hold:
			\begin{enumerate}
				\item $r$, $s$, and $t$ have the same parity,
				\item $5 | rs+rt+st$,
				\item $t\leq 4rs/(r+s)$.
			\end{enumerate}
		\end{theorema}
		\begin{conjecture}
			{\rm \cite{MR1366852}}.
			The necessary conditions in Theorem \ref{trmA} are also sufficient.	
		\end{conjecture}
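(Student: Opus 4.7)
My plan is to prove the special case announced in the abstract: the conjecture holds when $5\mid r$, $5\mid s$, $5\mid t$, $t+90\leq 4rs/(r+s)$, and $t\neq s+10$. Write $r=5a$, $s=5b$, $t=5c$ with $a\leq b\leq c$ of the same parity, and partition each of the three parts of $K_{r,s,t}$ into blocks of $5$ vertices, giving $a$, $b$, $c$ blocks in the respective parts. The edges of $K_{r,s,t}$ are then the edge-disjoint union of the complete bipartite graphs $K_{5,5}$ between blocks in different parts, and any 5-cycle in $K_{r,s,t}$ necessarily uses vertices from all three parts (since a bipartite graph contains no odd cycle).

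Next I would assemble a small palette of tripartite base graphs that are known (or shown in a preliminary lemma) to admit a 5-cycle decomposition: $K_{5,5,5}$ as the fundamental unit, together with $K_{5,5,10}$, $K_{5,10,10}$, and $K_{10,10,10}$, plus a handful of slightly larger pieces to cover boundary cases. Each base graph is characterised by its \emph{cycle-type profile}: every tripartite 5-cycle splits its five edges as either $3{+}1{+}1$, $1{+}3{+}1$, or $1{+}1{+}3$ across the three part-pairs (determined by which part contributes the lone vertex), and the profile records how many cycles of each type the base decomposition contains.

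The bulk of the argument is then counting and realization. Conditions (1)--(3) of Theorem \ref{trmA} force the global count of each cycle type in any 5-cycle decomposition of $K_{r,s,t}$ via a $3\times 3$ linear system whose right-hand side is $(25ab,25ac,25bc)$, and non-negativity of its solution is exactly condition (3). The stronger assumption $t+90 \leq 4rs/(r+s)$, equivalently $c+18\leq 4ab/(a+b)$, provides enough slack to distribute these counts across the $abc$ triples of blocks so that each local count is non-negative and can be realized by a base graph in the palette. Placing each selected base graph on its block-triple with its pre-computed 5-cycle decomposition then yields the desired decomposition of $K_{r,s,t}$.

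The main obstacle I foresee is in the boundary cases. The excluded case $t=s+10$, equivalently $c=b+2$, is presumably where the local integer system admits no non-negative solution realizable by the chosen palette, and accommodating it would require an additional base graph not conveniently available. A secondary difficulty is the preliminary step of constructing explicit 5-cycle decompositions of $K_{5,5,10}$, $K_{5,10,10}$, and their slightly larger siblings; these lie outside Sotteau's bipartite framework and must be built by hand, likely via difference methods on cyclic groups or starter-adder constructions, with a small finite case analysis to establish each profile in the palette.
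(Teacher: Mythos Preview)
Your opening reduction agrees with the paper's: both pass from $K_{5a,5b,5c}$ to an underlying structure on $a,b,c$ blocks. But from that point on the two approaches diverge, and yours carries a real gap.

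The paper does \emph{not} try to tile $K_{5a,5b,5c}$ by a palette of small complete tripartite pieces. Instead it proves that if the small graph $K_{a,b,c}$ can be decomposed into \emph{triangles and $5$-cycles}, then its $5$-blowup $K_{5a,5b,5c}$ decomposes into $5$-cycles, because $B_5(C_3)=K_{5,5,5}$ and $B_5(C_5)$ each admit $5$-cycle decompositions. The heart of the paper is then an explicit construction of a triangle/$5$-cycle decomposition of $K_{a,b,c}$ via a \emph{latin representation}: an $s\times r$ back-circulant latin rectangle augmented by extra rows and columns, whose cells encode the triangles and single edges of $K_{a,b,c}$. The non-triangle cells are grouped into ``partial-trades'' and then extended, using carefully chosen diagonal cells of the latin square part, into full trades of four fixed types ($1A$, $1B$, $1C$, $1E$), each of which is a small configuration known to split into $5$-cycles. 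The leftover latin-square cells are triangles. A cell-count shows the construction succeeds whenever $c+18\le 4ab/(a+b)$ and $c\neq b+2$, which unwinds to your stated hypotheses.

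Your plan, by contrast, tries to realise a global cycle-type count $(x,y,z)$ by placing base graphs on block-triples. The gap is that this ``distribution'' step is not a linear programming problem but a design-theoretic one: the $K_{5,5}$ between blocks $A_i$ and $B_j$ lies in all $c$ triples $(i,j,k)$, so you cannot simply drop a $K_{5,5,5}$ (or any fixed piece) on each of the $abc$ triples without overcounting edges. To cover every block-pair exactly once you would need an auxiliary decomposition of the block index set---essentially a resolvable transversal design or a latin-square-type structure---and you have not indicated how to build one compatible with your cycle-type constraints. Moreover, your palette is not the right one: the $5$-blowup of a $5$-cycle is the $5$-partite graph $B_5(C_5)$, not any complete tripartite $K_{5,5,10}$ or $K_{5,10,10}$, so even if you had the design you would still need those blown-up pentagons rather than the graphs you list. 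The paper's latin-representation machinery is precisely what supplies the missing explicit placement; without an analogue of it, your outline does not yet constitute a proof.
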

		
		Mahmoodian and Mirzakhani \cite{MR1366852} also showed that if two parts of a complete tripartite graph have an equal number of vertices, say $K_{r,r,s}$, then the above conditions are also sufficient, except only for the case when $r$ is a multiple of $5$ but $s$ is not. Cavenagh and Billington \cite{MR1795321} extended this result to when two parts have equal size or $r$ and $s$ are both multiples of 10. They also showed in \cite{MR2855067} that when $r$, $s$ and $t$ are all odd and $r \leq s \leq t \leq \kappa r$, where $\kappa \approx 1.0806$, then the conjectured necessary conditions for decomposing are sufficient. In \cite{MR3013258} this result is further improved to the cases where $\kappa \approx \frac{5}{3}$. Cavenagh \cite{MR1927056} also showed that if each part of a graph has an even size, then the necesserly conditions of Theorem \ref{trmA} are sufficient. We show that the conjecture is true for $K_{r,s,t}$ when $r$, $s$, and $t$ are multiples of $5$, where $t+90 \leq \frac{4rs}{r+s}$ and $t \neq s+10$.
		\subsection{Latin representation and Trades}
		A method for decomposing a complete tripartite graph into 5-cycles was developed in \cite{MR1795321} which in fact extends the idea of the latin square. Latin representation of a complete tripartite graph $K_{r, s, t}$ with parts $R$, $S$, and $T$ (respectively), say $L$, is a latin rectangle $L'$ of order $s \times r$ based on $t$ elements together with a set of $t-r$ entries at the end of each row of $L'$, and a set of $t-s$ entries at the end of each column of $L'$.
		
		%%%%%%%%%%%%%%%%%%%%%%%%%%%%%%%%
		\begin{figure}[H]
			\psscalebox{0.4 0.4}
			{
				\begin{pspicture}(0,-6.2)(20.82,6.2)
				\definecolor{colour0}{rgb}{0.8,0.8,0.8}
				\pscustom[linecolor=black, linewidth=0.04]
				{
					\newpath
					\moveto(11.2,4.2)
				}
				\psframe[linecolor=black, linewidth=0.04, dimen=outer](7.6,4.6)(2.0,-4.2)
				\psframe[linecolor=black, linewidth=0.02, dimen=outer](4.4,4.6)(4.0,-4.2)
				\psframe[linecolor=black, linewidth=0.02, dimen=outer](7.6,1.0)(2.0,0.6)
				\rput[bl](4.0,5.0){\fontsize{16 pt}{3 pt} \selectfont $i$}
				\rput[bl](1.5,0.6){\fontsize{16 pt}{3 pt} \selectfont $j$}
				\psline[linecolor=black, linewidth=0.04](7.6,4.6)(12.0,4.6)(12.0,-4.2)(7.6,-4.2)
				\psline[linecolor=black, linewidth=0.04](2.0,-4.2)(2.0,-5.4)(7.6,-5.4)(7.6,-4.2)(7.6,-4.2)
				\psline[linecolor=black, linewidth=0.04](15.6,1.0)(15.6,-2.2)(15.6,-2.2)
				\psline[linecolor=black, linewidth=0.04](16.8,1.0)(16.8,-2.2)
				\psbezier[linecolor=black, linewidth=0.04](15.6,-2.2)(15.6,-2.7818182)(16.8,-2.7818182)(16.8,-2.2)
				\psbezier[linecolor=black, linewidth=0.04](16.8,1.0)(16.8,1.5818182)(15.6,1.5818182)(15.6,1.0)
				\psline[linecolor=black, linewidth=0.04](17.6,1.8)(17.6,-2.2)(17.6,-2.2)
				\psline[linecolor=black, linewidth=0.04](18.8,1.8)(18.8,-2.2)
				\psbezier[linecolor=black, linewidth=0.04](17.6,-2.2)(17.6,-2.9272728)(18.8,-2.9272728)(18.8,-2.2)
				\psbezier[linecolor=black, linewidth=0.04](18.8,1.8)(18.8,2.5272727)(17.6,2.5272727)(17.6,1.8)
				\psline[linecolor=black, linewidth=0.04](19.6,2.6)(19.6,-2.2)(19.6,-2.2)
				\psline[linecolor=black, linewidth=0.04](20.8,2.6)(20.8,-2.2)
				\psbezier[linecolor=black, linewidth=0.04](19.6,-2.2)(19.6,-3.0727272)(20.8,-3.0727272)(20.8,-2.2)
				\psbezier[linecolor=black, linewidth=0.04](20.8,2.6)(20.8,3.4727273)(19.6,3.4727273)(19.6,2.6)
				\pscircle[linecolor=black, linewidth=0.04, fillstyle=gradient, gradlines=2000, gradbegin=black, gradend=black, dimen=outer](20.2,2.2){0.2}
				\pscircle[linecolor=black, linewidth=0.04, fillstyle=gradient, gradlines=2000, gradbegin=black, gradend=black, dimen=outer](18.2,-2.2){0.2}
				\pscircle[linecolor=black, linewidth=0.04, fillstyle=gradient, gradlines=2000, gradbegin=black, gradend=black, dimen=outer](16.2,0.2){0.2}
				\psline[linecolor=black, linewidth=0.04](16.0,0.2)(18.0,-2.2)(18.0,-2.2)
				\rput[bl](16.0,1.8){\fontsize{15 pt}{3 pt} \selectfont $R$}
				\rput[bl](18.0,2.6){\fontsize{15 pt}{3 pt} \selectfont $S$}
				\rput[bl](20.0,3.4){\fontsize{15 pt}{3 pt} \selectfont $T$}
				\rput[bl](15.9,0.6){\fontsize{16 pt}{3 pt} \selectfont $r_i$}
				\rput[bl](17.9,-1.9){\fontsize{16 pt}{3 pt} \selectfont $s_j$}
				\rput[bl](19.9,2.5){\fontsize{16 pt}{3 pt} \selectfont $t_k$}
				\psline[linecolor=black, linewidth=0.04](18.4,-2.2)(20.4,2.2)
				\psline[linecolor=black, linewidth=0.04](16.0,0.2)(20.0,2.2)
				\psline[linecolor=black, linewidth=0.02, tbarsize=0.07055555cm 5.0]{|-|}(7.6,5.0)(12.0,5.0)(12.0,5.0)
				\rput[bl](9.5,5.2){\fontsize{22 pt}{3 pt} \selectfont t-r}
				\rput[bl](4.5,-6.3){\fontsize{22 pt}{3 pt} \selectfont r}
				\psline[linecolor=black, linewidth=0.02, tbarsize=0.07055555cm 5.0]{|-|}(2.0,-5.8)(7.6,-5.8)
				\psline[linecolor=black, linewidth=0.02, tbarsize=0.07055555cm 5.0]{|-|}(1.2,4.6)(1.2,-4.2)
				\rput[bl](0.2,0.6){\fontsize{22 pt}{3 pt} \selectfont s}
				\psline[linecolor=black, linewidth=0.02, tbarsize=0.07055555cm 5.0]{|-|}(1.2,-4.2)(1.2,-5.4)
				\rput[bl](0.0,-5.0){\fontsize{22 pt}{3 pt} \selectfont t-s}
				\psline[linecolor=black, linewidth=0.02](15.2,-0.4)(15.2,-0.5)(12.96,-0.5)(12.96,-0.6)(12.4,-0.4)(12.96,-0.2)(12.96,-0.3)(15.2,-0.3)(15.2,-0.4)
				\psframe[linecolor=black, linewidth=0.02, fillstyle=gradient, gradlines=2000, gradbegin=black, gradend=black, dimen=outer](4.02,1.0)(4.0,0.98)
				\psframe[linecolor=black, linewidth=0.02, fillstyle=solid,fillcolor=colour0, opacity=0.5, dimen=outer](4.4,1.0)(4.0,0.6)
				\rput[bl](3.98,0.6){\fontsize{15 pt}{3 pt} \selectfont $k$}
				\end{pspicture}}
			\centering
			\caption{Each entry of the latin rectangle $L'$ represents a triangle}
			\label{latin_model}
		\end{figure}
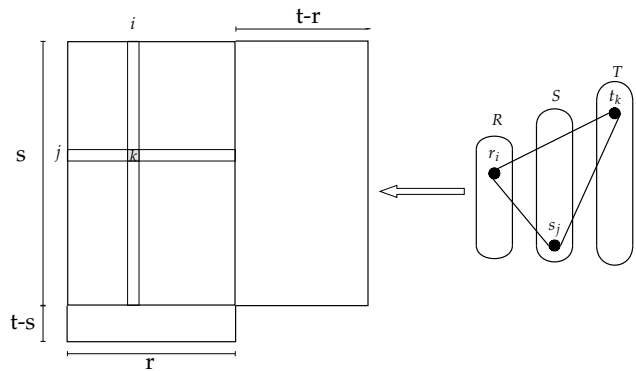
		%%%%%%%%%%%%%%%%%%%%%%%%%%%%%%%%
		Each entry of the $s \times r$ latin rectangle $L'$ represents a triangle in $K_{r,s,t}$, see Figure \ref{latin_model}. Each entry such as $k$ in the $i$-th column and $j$-th row of the $(t-s) \times r$ rectangle at the end of first $r$ columns represents the edge between parts $R$ and $T$, and each entry such as $k$ in the $i$-th column and $j$-th row of the $s \times (t-r)$ entry at the end of the $s$ first rows represents the edge between parts $S$ and $T$.\\
		In this paper, the cells of a latin representation are filled as a back circulant latin square of size $s$, with entries from 1 to $s$; and with entries $s + 1$ to $t$ in increasing order at the end of all rows and first $r$ columns. In the figure the notation of some entries have indices which will be explained later.
		
		%%%%%%%%%%%%%%%%%%%%%%%%%%%%%%%%
		\begin{figure}[H]
			\centering
			\begin{tikzpicture}[scale=.8]
			\begin{scope}
			\tiny
			\draw[step=1cm] (-8,-6) grid (8,6);
			\draw[step=1cm] (-8, -10) grid (-2, -6);
			\draw[very thick] (-8, -6) rectangle (-2, 6);
			\draw[very thick] (-2, -6) rectangle (4, 6);
			\draw[very thick] (4, -6) rectangle (8, 6);
			\draw[very thick] (-8, -10) rectangle (-2, -6);
			
			\node at (-7.50, 5.50) {1}; \node at (-6.50, 5.50) {$2_{1A}$}; \node at (-5.50, 5.50) {3}; \node at (-4.50, 5.50) {4}; \node at (-3.50, 5.50) {5}; \node at (-2.50, 5.50) {6}; \node at (-1.50, 5.50) {7}; \node at (-0.50, 5.50) {8}; \node at (0.50, 5.50) {9}; \node at (1.50, 5.50) {10}; \node at (2.50, 5.50) {11}; \node at (3.50, 5.50) {12}; \node at (4.50, 5.50) {$13_{1A}$}; \node at (5.50, 5.50) {$14_{1A}$}; \node at (6.50, 5.50) {15}; \node at (7.50, 5.50) {16}; 
			\node at (-7.50, 4.50) {$2_{1A}$}; \node at (-6.50, 4.50) {3}; \node at (-5.50, 4.50) {4}; \node at (-4.50, 4.50) {5}; \node at (-3.50, 4.50) {6}; \node at (-2.50, 4.50) {7}; \node at (-1.50, 4.50) {8}; \node at (-0.50, 4.50) {9}; \node at (0.50, 4.50) {10}; \node at (1.50, 4.50) {11}; \node at (2.50, 4.50) {12}; \node at (3.50, 4.50) {1}; \node at (4.50, 4.50) {$13_{1A}$}; \node at (5.50, 4.50) {$14_{1A}$}; \node at (6.50, 4.50) {15}; \node at (7.50, 4.50) {16}; 
			\node at (-7.50, 3.50) {3}; \node at (-6.50, 3.50) {$4_{1B}$}; \node at (-5.50, 3.50) {5}; \node at (-4.50, 3.50) {6}; \node at (-3.50, 3.50) {7}; \node at (-2.50, 3.50) {8}; \node at (-1.50, 3.50) {9}; \node at (-0.50, 3.50) {10}; \node at (0.50, 3.50) {11}; \node at (1.50, 3.50) {12}; \node at (2.50, 3.50) {1}; \node at (3.50, 3.50) {2}; \node at (4.50, 3.50) {$13_{1B}$}; \node at (5.50, 3.50) {$14_{1B}$}; \node at (6.50, 3.50) {15}; \node at (7.50, 3.50) {16}; 
			\node at (-7.50, 2.50) {$4_{1B}$}; \node at (-6.50, 2.50) {5}; \node at (-5.50, 2.50) {6}; \node at (-4.50, 2.50) {7}; \node at (-3.50, 2.50) {8}; \node at (-2.50, 2.50) {9}; \node at (-1.50, 2.50) {10}; \node at (-0.50, 2.50) {11}; \node at (0.50, 2.50) {12}; \node at (1.50, 2.50) {1}; \node at (2.50, 2.50) {2}; \node at (3.50, 2.50) {3}; \node at (4.50, 2.50) {13}; \node at (5.50, 2.50) {14}; \node at (6.50, 2.50) {15}; \node at (7.50, 2.50) {16}; 
			\node at (-7.50, 1.50) {$5_{1C}$}; \node at (-6.50, 1.50) {6}; \node at (-5.50, 1.50) {7}; \node at (-4.50, 1.50) {8}; \node at (-3.50, 1.50) {9}; \node at (-2.50, 1.50) {10}; \node at (-1.50, 1.50) {11}; \node at (-0.50, 1.50) {12}; \node at (0.50, 1.50) {1}; \node at (1.50, 1.50) {2}; \node at (2.50, 1.50) {3}; \node at (3.50, 1.50) {4}; \node at (4.50, 1.50) {$13_{1C}$}; \node at (5.50, 1.50) {$14_{1C}$}; \node at (6.50, 1.50) {15}; \node at (7.50, 1.50) {16}; 
			\node at (-7.50, 0.50) {$6_{1C}$}; \node at (-6.50, 0.50) {7}; \node at (-5.50, 0.50) {8}; \node at (-4.50, 0.50) {9}; \node at (-3.50, 0.50) {10}; \node at (-2.50, 0.50) {11}; \node at (-1.50, 0.50) {12}; \node at (-0.50, 0.50) {1}; \node at (0.50, 0.50) {2}; \node at (1.50, 0.50) {3}; \node at (2.50, 0.50) {4}; \node at (3.50, 0.50) {5}; \node at (4.50, 0.50) {$13_{1C}$}; \node at (5.50, 0.50) {$14_{1C}$}; \node at (6.50, 0.50) {15}; \node at (7.50, 0.50) {16}; 
			\node at (-7.50, -0.50) {$7$}; \node at (-6.50, -0.50) {$8_{1E}$}; \node at (-5.50, -0.50) {$9_{1E}$}; \node at (-4.50, -0.50) {$10_{1E}$}; \node at (-3.50, -0.50) {11}; \node at (-2.50, -0.50) {12}; \node at (-1.50, -0.50) {1}; \node at (-0.50, -0.50) {2}; \node at (0.50, -0.50) {3}; \node at (1.50, -0.50) {4}; \node at (2.50, -0.50) {5}; \node at (3.50, -0.50) {6}; \node at (4.50, -0.50) {$13$}; \node at (5.50, -0.50) {$14$}; \node at (6.50, -0.50) {15}; \node at (7.50, -0.50) {16}; 
			\node at (-7.50, -1.50) {$8$}; \node at (-6.50, -1.50) {9}; \node at (-5.50, -1.50) {10}; \node at (-4.50, -1.50) {11}; \node at (-3.50, -1.50) {12}; \node at (-2.50, -1.50) {1}; \node at (-1.50, -1.50) {2}; \node at (-0.50, -1.50) {3}; \node at (0.50, -1.50) {4}; \node at (1.50, -1.50) {5}; \node at (2.50, -1.50) {6}; \node at (3.50, -1.50) {7}; \node at (4.50, -1.50) {13}; \node at (5.50, -1.50) {$14$}; \node at (6.50, -1.50) {$15$}; \node at (7.50, -1.50) {16}; 
			\node at (-7.50, -2.50) {$9$}; \node at (-6.50, -2.50) {10}; \node at (-5.50, -2.50) {11}; \node at (-4.50, -2.50) {12}; \node at (-3.50, -2.50) {1}; \node at (-2.50, -2.50) {2}; \node at (-1.50, -2.50) {3}; \node at (-0.50, -2.50) {4}; \node at (0.50, -2.50) {5}; \node at (1.50, -2.50) {6}; \node at (2.50, -2.50) {7}; \node at (3.50, -2.50) {8}; \node at (4.50, -2.50) {$13$}; \node at (5.50, -2.50) {14}; \node at (6.50, -2.50) {$15$}; \node at (7.50, -2.50) {16}; 
			\node at (-7.50, -3.50) {10}; \node at (-6.50, -3.50) {11}; \node at (-5.50, -3.50) {12}; \node at (-4.50, -3.50) {1}; \node at (-3.50, -3.50) {2}; \node at (-2.50, -3.50) {3}; \node at (-1.50, -3.50) {4}; \node at (-0.50, -3.50) {5}; \node at (0.50, -3.50) {6}; \node at (1.50, -3.50) {7}; \node at (2.50, -3.50) {8}; \node at (3.50, -3.50) {9}; \node at (4.50, -3.50) {13}; \node at (5.50, -3.50) {14}; \node at (6.50, -3.50) {15}; \node at (7.50, -3.50) {16}; 
			\node at (-7.50, -4.50) {11}; \node at (-6.50, -4.50) {12}; \node at (-5.50, -4.50) {1}; \node at (-4.50, -4.50) {2}; \node at (-3.50, -4.50) {3}; \node at (-2.50, -4.50) {4}; \node at (-1.50, -4.50) {5}; \node at (-0.50, -4.50) {6}; \node at (0.50, -4.50) {7}; \node at (1.50, -4.50) {8}; \node at (2.50, -4.50) {9}; \node at (3.50, -4.50) {10}; \node at (4.50, -4.50) {13}; \node at (5.50, -4.50) {14}; \node at (6.50, -4.50) {15}; \node at (7.50, -4.50) {16}; 
			\node at (-7.50, -5.50) {12}; \node at (-6.50, -5.50) {1}; \node at (-5.50, -5.50) {2}; \node at (-4.50, -5.50) {3}; \node at (-3.50, -5.50) {4}; \node at (-2.50, -5.50) {5}; \node at (-1.50, -5.50) {6}; \node at (-0.50, -5.50) {7}; \node at (0.50, -5.50) {8}; \node at (1.50, -5.50) {9}; \node at (2.50, -5.50) {10}; \node at (3.50, -5.50) {11}; \node at (4.50, -5.50) {13}; \node at (5.50, -5.50) {14}; \node at (6.50, -5.50) {15}; \node at (7.50, -5.50) {16};   
			
			%%%%%%%%%%%%%%%%%%%%%%%%%%%%%%%%%%%%%%%%%%%%
			\node at (-7.50, -6.50) {$13_{1B}$}; \node at (-6.50, -6.50) {$13_{1E}$}; \node at (-5.50, -6.50) {13}; \node at (-4.50, -6.50) {$13_{1E}$}; \node at (-3.50, -6.50) {13}; \node at (-2.50, -6.50) {13}; 
			\node at (-7.50, -7.50) {$14_{1B}$}; \node at (-6.50, -7.50) {$14_{1E}$}; \node at (-5.50, -7.50) {$14_{1E}$}; \node at (-4.50, -7.50) {14}; \node at (-3.50, -7.50) {14}; \node at (-2.50, -7.50) {14}; 
			\node at (-7.50, -8.50) {15}; \node at (-6.50, -8.50) {15}; \node at (-5.50, -8.50) {$15_{1E}$}; \node at (-4.50, -8.50) {$15_{1E}$}; \node at (-3.50, -8.50) {15}; \node at (-2.50, -8.50) {15}; 
			\node at (-7.50, -9.50) {16}; \node at (-6.50, -9.50) {16}; \node at (-5.50, -9.50) {16}; \node at (-4.50, -9.50) {16}; \node at (-3.50, -9.50) {16}; \node at (-2.50, -9.50) {16}; 
			
			\end{scope}  
			
			\end{tikzpicture}
			\caption{A latin representation of $K_{6,12,16}$ and four trades in it}
			\label{fig:exp_trade}
		\end{figure}
		%%%%%%%%%%%%%%%%%%%%%%%%%%%%%%%%
		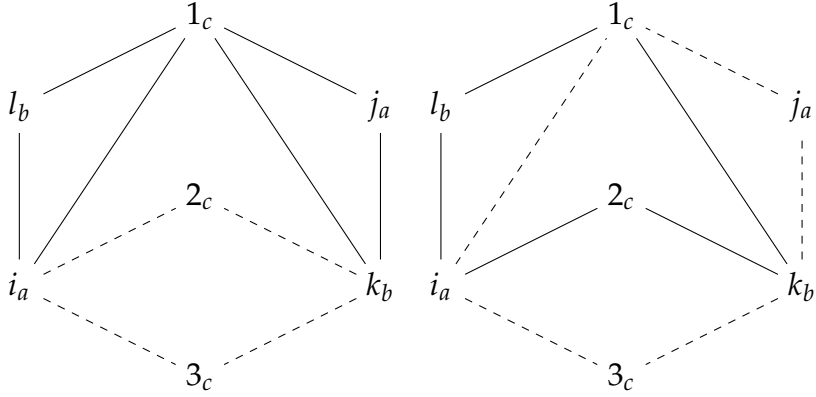
\begin{figure}[H]
			\centering
			\tikzstyle{vertex}=[circle,minimum size=20pt,inner sep=0pt]

			%% Adjacency matrix of graph
			%% \  a  b  c  d  e  f  g
			%% a  x  7     5
			%% b  7  x  8  9  7
			%% c     8  x     5
			%% d  5  9     x 15  6
			%% e     7  5 15  x  8  9
			%% f           6  8  x 11
			%% g              9  11 x
			\begin{tikzpicture}[scale=1.2]
			\node[vertex] (ia) at (0, -1) {$i_a$};
			\node[vertex] (lb) at (0, 1) {$l_b$};
			\node[vertex] (1c) at (2, 2) {$1_c$};
			\node[vertex] (2c) at (2, 0) {$2_c$};
			\node[vertex] (3c) at (2, -2) {$3_c$};
			\node[vertex] (kb) at (4, -1) {$k_b$};
			\node[vertex] (ja) at (4, 1) {$j_a$};
			\draw (ia) -- (lb);
			\draw(ia) -- (1c);
			\draw(lb) -- (1c);
			\draw(kb) -- (ja);
			\draw(kb) -- (1c);
			\draw(ja) -- (1c);
			\draw[dashed] (ia) -- (2c);
			\draw[dashed] (ia) -- (3c);
			\draw[dashed] (kb) -- (2c);
			\draw[dashed] (kb) -- (3c);
			\end{tikzpicture}
			\begin{tikzpicture}[scale=1.2]
			\node[vertex] (ia) at (0, -1) {$i_a$};
			\node[vertex] (lb) at (0, 1) {$l_b$};
			\node[vertex] (1c) at (2, 2) {$1_c$};
			\node[vertex] (2c) at (2, 0) {$2_c$};
			\node[vertex] (3c) at (2, -2) {$3_c$};
			\node[vertex] (kb) at (4, -1) {$k_b$};
			\node[vertex] (ja) at (4, 1) {$j_a$};
			\draw (ia) -- (lb);
			\draw[dashed] (ia) -- (1c);
			\draw(kb) -- (1c);
			\draw[dashed] (kb) -- (ja);
			\draw(lb) -- (1c);
			\draw[dashed] (ja) -- (1c);
			\draw(ia) -- (2c);
			\draw[dashed] (ia) -- (3c);
			\draw(kb) -- (2c);
			\draw[dashed] (kb) -- (3c);
			\end{tikzpicture}
			\caption{An illustration of a trade $1B$}
			\label{trade1b}
		\end{figure}
		%%%%%%%%%%%%%%%%%%%%%%%%%%%%%%%%
		
		\begin{definitiona}
			Let $M$ be a latin representation of the complete tripartite graph $K_{r, s, t}$. A \begin{sf}trade\end{sf} is a set of entries in $M$, corresponding to a set of triangles and edges in $K_{r, s, t}$ which can be decomposed into $5$-cycles, see Table \ref{trd_table}. In the figure indices $a$, $b$, and $c$ represent the parts $R$, $S$, and $T$ respectively, which the vertex belongs to.
		\end{definitiona}
		
		Two groups of trades were introduced in \cite{MR1795321}. The first group made by some triangles and single edges but the second group made only by triangles. Some of the trades that we need are shown in Table \ref{trd_table}.
		
		%%%%%%%%%%%%%%%%%%%%%%%%%%%%%%%%
		\begin{table}
			\centering
			\begin{tabular}{ | c | c | c | }\hline
				Trade & Entries used in latin & Exchange of edges in complete \\ type & representation & tripartite graphs with 5-cycles \\ \hline
				\textbf{1A} & 
				\noindent\parbox[c]{3cm}{
					\begin{pspicture}(0,0)(3,2)
					\definecolor{colour0}{rgb}{0.0,0.0,0.2}
					\psframe[linecolor=black, linewidth=0.04, dimen=outer](0.5,0.5)(2.5,1.5)
					\psline[linecolor=black, linewidth=0.04, doubleline=true, doublesep=0.02](1.5,0.5)(1.5,1.5)
					\rput[bl](0.2,0.7){j}
					\rput[bl](0.2,1.2){i}
					\rput[bl](0.65,1.6){k}
					\rput[bl](1.25,1.6){l}
					\rput[bl](1.15,1.13){1}
					\rput[bl](0.65,0.63){1}
					\rput[bl](1.65,0.63){2}
					\rput[bl](2.15,0.63){3}
					\rput[bl](1.65,1.13){2}
					\rput[bl](2.15,1.13){3}
					\end{pspicture}
				}
				& \begin{tabular}[x]{@{}c@{}}$(i_a,l_b,1_c),(j_a,k_b,1_c),(i_a,2_c,j_a,3_c)$\\$ \iff (i_a,l_b,1_c,j_a,2_c),(i_a,1_c,k_b,j_a,3_c)$\end{tabular}
				\\ \hline
				\textbf{1B} & 
				\noindent\parbox[c]{3cm}{
					\begin{pspicture}(0,0)(3,3)
					\definecolor{colour0}{rgb}{0.0,0.0,0.2}
					\psframe[linecolor=black, linewidth=0.04, dimen=outer](0.5,1.5)(2.5,2.5)
					\psframe[linecolor=black, linewidth=0.04, dimen=outer](0.5,0.5)(1.5,1.5)
					\psline[linecolor=black, linewidth=0.04, doubleline=true, doublesep=0.02](1.5,1.5)(1.5,2.5)
					\psline[linecolor=black, linewidth=0.04, doubleline=true, doublesep=0.02](0.5,1.5)(1.5,1.5)
					\rput[bl](0.2,1.7){j}
					\rput[bl](0.2,2.2){i}
					\rput[bl](0.65,2.6){k}
					\rput[bl](1.15,2.6){l}
					\rput[bl](1.15,2.13){1}
					\rput[bl](0.65,1.63){1}
					\rput[bl](1.65,2.13){2}
					\rput[bl](2.15,2.13){3}
					\rput[bl](0.65,1.11){2}
					\rput[bl](0.65,0.61){3}
					\end{pspicture}
				}
				& \begin{tabular}[x]{@{}c@{}}$(i_a,l_b,1_c),(j_a,k_b,1_c),(i_a,2_c,k_b,3_c)$\\$ \iff (i_a,l_b,1_c,k_b,2_c),(i_a,1_c,j_a,k_b,3_c)$\end{tabular}
				\\ \hline
				\textbf{1C} & 
				\noindent\parbox[c]{3cm}{
					\begin{pspicture}(0,0)(2.5,2)
					\definecolor{colour0}{rgb}{0.0,0.0,0.2}
					\psframe[linecolor=black, linewidth=0.04, dimen=outer](0.5,0.5)(2.0,1.5)
					\psline[linecolor=black, linewidth=0.04, doubleline=true, doublesep=0.02](1.0,0.5)(1.0,1.5)
					\rput[bl](0.2,0.7){j}
					\rput[bl](0.2,1.2){i}
					\rput[bl](0.65,1.6){k}
					\rput[bl](0.65,1.13){1}
					\rput[bl](0.65,0.63){2}
					\rput[bl](1.15,0.63){3}
					\rput[bl](1.65,0.63){4}
					\rput[bl](1.15,1.13){3}
					\rput[bl](1.65,1.13){4}
					\end{pspicture}
				}
				& \begin{tabular}[x]{@{}c@{}}$(i_a,k_b,1_c),(j_a,k_b,2_c),(i_a,3_c,j_a,4_c)$\\$ \iff (i_a,k_b,2_c,j_a,3_c),(i_a,1_c,k_b,j_a,4_c)$\end{tabular}
				\\ \hline
				\textbf{1E} & 
				\noindent\parbox[c]{3cm}{
					\begin{pspicture}(0,0)(2.5,2.5)
					\definecolor{colour0}{rgb}{0.0,0.0,0.2}
					\psframe[linecolor=black, linewidth=0.04, dimen=outer](0.5,0.5)(2.0,2.0)
					\psline[linecolor=black, linewidth=0.04, doubleline=true, doublesep=0.02](0.5,1.5)(2.0,1.5)
					\rput[bl](0.65,2.07){j}
					\rput[bl](1.15,2.12){k}
					\rput[bl](1.65,2.12){l}
					\rput[bl](0.23,1.62){i}
					\rput[bl](0.65,1.63){1}
					\rput[bl](1.15,1.63){2}
					\rput[bl](1.65,1.63){3}
					\rput[bl](0.65,1.1){4}
					\rput[bl](1.15,1.1){5}
					\rput[bl](1.65,1.1){6}
					\rput[bl](0.65,0.63){5}
					\rput[bl](1.15,0.63){6}
					\rput[bl](1.65,0.63){4}
					\end{pspicture}
				}
				& \begin{tabular}[x]{@{}c@{}}$(i_a,j_b,1_c),(i_a,k_b,2_c),(i_a,l_b,3_c),(j_b,4_c,l_b,6_c,k_b,5_c)$\\$ \iff (i_a,1_c,j_b,4_c,l_b),(i_a,2_c,k_b,5_c,j_b),(i_a,3_c,l_b,6_c,k_b)$\end{tabular}
				\\ \hline
			\end{tabular}
			\caption{Some trades of the first type}
			\label{trd_table}
		\end{table}
		%%%%%%%%%%%%%%%%%%%%%%%%%%%%%%%%
		Figure \ref{fig:exp_trade} and \ref{trade1b} show respectively the latin representation of $K_{6,12,16}$, and how a trade of type $1B$ can be decomposed into 5-cycles.
		In this paper, we use these trades for decomposing complete tripartite graphs into 5-cycles.
		\section{Transforming the problem}
		In the next section, we prove that existence of a decomposition of $K_{r,s,t}$ into triangles and 5-cycles implies that $K_{5r,5s,5t}$ can be decomposed into 5-cycles. For proving this statement, we state some related definitions and lemmas.
		
		\begin{definitiona}
			Assume $G(V,E)$ is a simple graph. We define a \begin{sf}k-blowup\end{sf} operation on $G$ as replacing each vertex of $G$ with $k$ vertices and drawing an edge between two vertices in the new graph if and only if their corresponding vertices in $G$ are adjacent. We denote the resulting graph as
			\begin{sf}
				$B_{k}(G)$
			\end{sf}.
		\end{definitiona}
		\begin{lemma}
			\label{DecompositionOfCycles}
			$B_{5}(C_{3})$ and $B_{5}(C_{5})$ can be decomposed into 5-cycles. 
		\end{lemma}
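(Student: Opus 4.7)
The plan is to handle the two graphs separately, identifying each with a familiar structure and either invoking a cited result or giving an explicit difference-based decomposition. For $B_5(C_3)$, since $C_3$ has every pair of its three vertices adjacent, the blowup $B_5(C_3)$ coincides with the complete tripartite graph $K_{5,5,5}$. A 5-cycle decomposition then follows immediately from the result of \cite{MR1010577} cited in the introduction---a complete $k$-partite graph with each part of size $m$ decomposes into $m$-cycles whenever $k$ and $m$ are both odd---applied with $k=m=5$. (Alternatively, this case falls under Mahmoodian and Mirzakhani's result for $K_{r,r,s}$ with $r=s=5$.)

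For $B_5(C_5)$, I would give an explicit decomposition. Label the five parts $V_0,V_1,V_2,V_3,V_4$ around the cycle and label the vertices of each $V_j$ by $\{0,1,2,3,4\}$, writing $v_j^a$ for the $a$-th vertex; all index arithmetic is taken modulo $5$. The edge set is then $\{v_j^a v_{j+1}^b : 0 \le j,a,b \le 4\}$, of cardinality $5 \cdot 25 = 125$. For each pair $(i,x) \in \{0,\dots,4\}^2$, I would define the 5-cycle
$$C_{i,x} = (v_0^x,\ v_1^{x+i},\ v_2^{x+2i},\ v_3^{x+3i},\ v_4^{x+4i}),$$
closed by the edge $v_4^{x+4i} v_0^x$; since $x - (x+4i) \equiv i \pmod 5$, every one of the five steps of $C_{i,x}$ uses the same constant ``difference'' $i$ between consecutive parts.

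To confirm that the $25$ cycles $\{C_{i,x}\}$ partition $E(B_5(C_5))$, I would argue by counting and uniqueness: the cycles contribute $25 \cdot 5 = 125$ edges, matching $|E(B_5(C_5))|$, and a given edge $v_j^a v_{j+1}^b$ lies in $C_{i,x}$ if and only if $i \equiv b-a \pmod 5$ and $x \equiv a - ji \pmod 5$, which determines $(i,x)$ uniquely. Neither part of the lemma presents a substantive obstacle; the main point is to recognize that the cyclic symmetry of $C_5$ admits a clean parallel-class construction based on constant differences.
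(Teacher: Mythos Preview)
Your proof is correct and essentially identical to the paper's: both identify $B_5(C_3)$ with $K_{5,5,5}$ and cite a known decomposition (the paper uses \cite{MR1366852}, which you give as your alternative), and your family $\{C_{i,x}\}$ is exactly the paper's five explicitly listed base-cycle families compressed into a single formula, with your uniqueness argument supplying the verification the paper leaves implicit. One small slip: when invoking \cite{MR1010577} for $K_{5,5,5}$ the relevant parameters are $k=3$, $m=5$ (three parts of size five), not $k=m=5$.
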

		\begin{proof}
			$B_{5}(C_{3})$ is equivalent to $K_{5,5,5}$. By \cite{MR1366852}, we know that $K_{5,5,5}$ can be decomposed into 5-cycles. 
			In order to decompose $B_{5}(C_{5})$, assume $A_{i} = \{a_{i,1},a_{i,2},...,a_{i,5}\}$ be a set of vertices corresponding to vertex $i$ of $C_{5}$ after {\em 5-blowup}
			operation. With the following base cycles, we can decompose $B_{5}(C_{5})$:\\
			\begin{equation*}
				(a_{1,1+i}, a_{2,1+i}, a_{3,1+i}, a_{4,1+i}, a_{5,1+i})
			\end{equation*}
			\begin{equation*}
				(a_{1,1+i}, a_{2,2+i}, a_{3,3+i}, a_{4,4+i}, a_{5,5+i})
			\end{equation*}
			\begin{equation*}
				(a_{1,1+i}, a_{2,3+i}, a_{3,5+i}, a_{4,2+i}, a_{5,4+i})
			\end{equation*}
			\begin{equation*}
				(a_{1,1+i}, a_{2,4+i}, a_{3,2+i}, a_{4,5+i}, a_{5,3+i})
			\end{equation*}
			\begin{equation*}
				(a_{1,1+i}, a_{2,5+i}, a_{3,4+i}, a_{4,3+i}, a_{5,2+i})
			\end{equation*}
			by considering $i = 0,...,4$ and assuming the calculations in mod 5, a decomposition of $B_{5}(C_{5})$ will be obtained. 
		\end{proof}
		\begin{proposition}
			If all three necessary conditions suggested in Theorem ~\ref{trmA} hold for $K_{5r,5s,5t}$, then the first and the third conditions also hold for $K_{r,s,t}$.
			\label{pp:conditions}
		\end{proposition}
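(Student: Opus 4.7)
The plan is to observe that both conditions transfer under the scaling $r \mapsto 5r$, $s \mapsto 5s$, $t \mapsto 5t$ by direct algebraic manipulation, so the proof should be a short two-step check.

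First I would handle the parity condition. Since $5$ is odd, $5n \equiv n \pmod{2}$ for any integer $n$, hence $5r$, $5s$, $5t$ share a common parity if and only if $r$, $s$, $t$ do. So condition (1) for $K_{5r,5s,5t}$ immediately yields condition (1) for $K_{r,s,t}$.

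Next I would handle condition (3). Assuming $5t \leq \dfrac{4(5r)(5s)}{5r+5s}$, I simplify the right-hand side:
\[
\frac{4(5r)(5s)}{5r+5s} \;=\; \frac{100\,rs}{5(r+s)} \;=\; \frac{20\,rs}{r+s}.
\]
Dividing the resulting inequality $5t \leq \dfrac{20rs}{r+s}$ by $5$ gives $t \leq \dfrac{4rs}{r+s}$, which is condition (3) for $K_{r,s,t}$.

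There is no real obstacle here; the only point worth flagging is that condition (2) does \emph{not} descend, because $5 \mid (5r)(5s) + (5r)(5t) + (5s)(5t) = 25(rs+rt+st)$ holds automatically and carries no information about $rs+rt+st \bmod 5$. This is precisely why the proposition only claims the first and third conditions transfer. With the two displays above the proof is complete.
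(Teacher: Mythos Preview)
Your proof is correct and is exactly the kind of ``simple calculation'' the paper has in mind; in fact the paper does not spell out a proof of this proposition at all, so your two-step check (parity via $5n\equiv n\pmod 2$, and the inequality via cancelling the factor $5$) is precisely what is needed. Your remark on why condition~(2) fails to descend is also accurate and worth keeping.
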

		The proof of the following lemma can be obtained by some simple calculations.
		\begin{lemma}
			The existence of decomposition of $K_{r,s,t}$ into triangles and 5-cycles, implies that $K_{5r,5s,5t}$ can be decomposed into 5-cycles.
		\end{lemma}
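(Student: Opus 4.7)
The plan is to show that the $5$-blowup operation translates any $\{C_3, C_5\}$-decomposition of $K_{r,s,t}$ into a $5$-cycle decomposition of $K_{5r,5s,5t}$, using Lemma~\ref{DecompositionOfCycles} as a black box.

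First I would verify the simple identification $B_{5}(K_{r,s,t}) = K_{5r,5s,5t}$. Indeed, in $K_{r,s,t}$ the three parts are independent sets and every cross-pair is adjacent; after blowing up each vertex into $5$ copies, the three blocks of sizes $5r$, $5s$, $5t$ remain independent (no edges were added within a part, because no edges existed), and every vertex in one block is joined to every vertex in another block, giving exactly $K_{5r,5s,5t}$.

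Next I would observe how the blowup acts on subgraphs. If $H$ is a subgraph of $G$, then $B_{5}(H)$ is naturally a subgraph of $B_{5}(G)$, and if $E(G)$ is partitioned by subgraphs $H_1, H_2, \dots, H_m$, then $E(B_5(G))$ is partitioned by $B_5(H_1), \dots, B_5(H_m)$, because each edge $uv$ of $G$ corresponds to the $K_{5,5}$ between the blowup-classes of $u$ and $v$, and these $K_{5,5}$'s partition $E(B_5(G))$. Applied to a decomposition of $K_{r,s,t}$ into triangles and $5$-cycles, this yields a partition of $E(K_{5r,5s,5t})$ into copies of $B_5(C_3)$ and $B_5(C_5)$.

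Finally, I would invoke Lemma~\ref{DecompositionOfCycles}: every copy of $B_5(C_3)$ and every copy of $B_5(C_5)$ admits its own decomposition into $5$-cycles. Taking the union over all pieces produces a $5$-cycle decomposition of $K_{5r,5s,5t}$, as required. I do not anticipate a serious obstacle; the only thing to be careful about is the bookkeeping that the blowups of the pieces really do partition $E(K_{5r,5s,5t})$, which follows from the previous paragraph. The verbal claim that ``the proof follows by some simple calculations'' in the excerpt is therefore essentially a consequence of Lemma~\ref{DecompositionOfCycles} together with this edge-partition observation.
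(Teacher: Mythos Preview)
Your proposal is correct and follows exactly the paper's approach: identify $K_{5r,5s,5t}$ with $B_5(K_{r,s,t})$, blow up the given $\{C_3,C_5\}$-decomposition, and then apply Lemma~\ref{DecompositionOfCycles} to each piece. The paper's proof is in fact a two-sentence version of what you wrote, so your added bookkeeping (that the blowups of the pieces partition the edge set) just makes explicit what the paper leaves implicit.
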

		\begin{proof}
			We know that $K_{5r,5s,5t}$ is equivalent to $B_{5}(K_{r,s,t})$. The existence of decomposition of $K_{5r,5s,5t}$ into 5-cycles is deduced from Lemma ~\ref{DecompositionOfCycles} and a decomposition of $K_{r,s,t}$ into triangles and 5-cycles.
		\end{proof}
		%\section{Complete tripartite graphs $K_{5r,5s,5t}$}
		\section{Preliminaries}
		As mentioned in the previous section, in order to decompose $K_{5r,5s,5t}$ into 5-cycles, it is enough to show that if three mentioned conditions are satisfied for a complete tripartite graph $K_{5r,5s,5t}$, then we can decompose $K_{r,s,t}$ into triangles and 5-cycles. \\
		In \cite{MR1927056} Cavenagh proved the main conjecture where the parameters $r$, $s$, and $t$ are all even. Therefore, in this section, we just consider the case where all parts of the graph have an odd size.
		Additionally, based on Proposition \ref{pp:conditions}, we know that if all the three necessary conditions were held for $K_{5r,5s,5t}$, then the first and the third conditions hold for $K_{r,s,t}$. 
		For any two integers $x$ and $y$, with a slight abuse of notation, we use the expression $x \pmod y $ to refer to the integer $a$ that satisfies the congruence relation $x \equiv a (\bmod~\: y)$ as well as an implicit relation $0 \leq a \leq y-1$.
		
		\begin{definitiona}
			In a latin representation of $G$ An \begin{sf}even cell\end{sf} is a cell with an even entry, and an \begin{sf}odd cell\end{sf} is a cell with an odd entry.
		\end{definitiona}
		We will show that a graph $K_{r,s,t}$ satisfying the given conditions, can be decomposed into 3 and 5 cycles.
		To do so, we partition the latin representation of $K_{r,s,t}$ into four parts $A$, $B$, $C$, and $D$ as follows:
		\begin{enumerate}
			\item Part $A$: All cells $(i,j)$, where $1\le i\le s$ and $1\le j\le r$.
			\item Part $B$: All cells $(i,j)$, where $1\le i\le s$ and $r+1\le j\le s$.
			\begin{itemize}
				\item Part $B_1$: Union of the even cells in ($r+1$)th column, the odd cells in ($s-1$)th column, cells in ($s$)th column and all cells with entry $s$ in part $B$.
				\item Part $B_2$: $B \setminus B_1$
			\end{itemize}
			\item Part $C$: All cells $(i,j)$, where $1\le i\le s$ and $s+1\le j\le t$.
			\begin{itemize}
				\item Part $C_1$: All cells in columns $(s+1)$ through $(t-4)$.
				\item Part $C_2$: $C \setminus C_1$
			\end{itemize}
			\item Part $D$: All cells $(i,j)$, where $s+1\le i\le t$ and $1\le j\le r$.
			\begin{itemize}
				\item Part $D_1$: All cells in rows $(s+1)$ through $(t-4)$.
				\item Part $D_2$: $D \setminus D_1$
			\end{itemize}
		\end{enumerate}
		
		%%%%%%%%%%%%%%%%%%%%%%%%%%%%%%%%
		\begin{figure}[H]
			\psscalebox{1.3 1.4}
			{
				\begin{pspicture}(0,0)(8,8)
				\psframe[linecolor=black, linewidth=0.04, dimen=outer](0.5,2.5)(7.5,7.1)
				\psframe[linecolor=black, linewidth=0.04, dimen=outer](0.5,0.5)(3.3,2.5)
				%\psline[linecolor=black, linewidth=0.04](3.3,2.5)(3.3,7.1)
				\psline[linecolor=black, linewidth=0.04](5.4,2.5)(5.4,7.1)
				
				\psline[linecolor=black, linewidth=0.02](0.85,0.5)(0.85,7.1)
				\psline[linecolor=black, linewidth=0.02](1.2,0.5)(1.2,7.1)
				
				\psline[linecolor=black, linewidth=0.02](2.6,0.5)(2.6,7.1)
				\psline[linecolor=black, linewidth=0.02](2.95,0.5)(2.95,7.1)
				
				\psline[linecolor=black, linewidth=0.02](3.65,2.5)(3.65,7.1)
				\psline[linecolor=black, linewidth=0.02](4,2.5)(4,7.1)
				
				\psline[linecolor=black, linewidth=0.02](4.7,2.5)(4.7,7.1)
				\psline[linecolor=black, linewidth=0.02](5.05,2.5)(5.05,7.1)
				
				\psline[linecolor=black, linewidth=0.04](5.4,2.5)(5.4,7.1)
				\psline[linecolor=black, linewidth=0.02](5.75,2.5)(5.75,7.1)
				\psline[linecolor=black, linewidth=0.02](6.1,2.5)(6.1,7.1)
				\psline[linecolor=black, linewidth=0.02](6.8,2.5)(6.8,7.1)
				\psline[linecolor=black, linewidth=0.02](7.15,2.5)(7.15,7.1)
				\psline[linecolor=black, linewidth=0.02](0.5,6.75)(7.5,6.75)
				\psline[linecolor=black, linewidth=0.02](0.5,6.4)(7.5,6.4)
				
				\psline[linecolor=black, linewidth=0.02](0.5,2.85)(7.5,2.85)
				\psline[linecolor=black, linewidth=0.02](0.5,3.2)(7.5,3.2)
				
				\psline[linecolor=black, linewidth=0.02](0.5,2.15)(3.3,2.15)
				\psline[linecolor=black, linewidth=0.02](0.5,1.8)(3.3,1.8)
				
				\psline[linecolor=black, linewidth=0.02](0.5,0.85)(3.3,0.85)
				\psline[linecolor=black, linewidth=0.02](0.5,1.2)(3.3,1.2)
				
				\psline[linecolor=black, linewidth=0.03, doubleline=true, doublesep=0.01](0.5,2.5)(3.3,2.5)
				\psline[linecolor=black, linewidth=0.03, doubleline=true, doublesep=0.01](3.3,2.53)(3.3,7.1)
				
				\rput[bl](0.64,6.85){\fontsize{5.5 pt}{3 pt}\selectfont 1}
				\rput[bl](0.99,6.85){\fontsize{5.5 pt}{3 pt}\selectfont 2}
				
				\rput[bl](0.64,6.5){\fontsize{5.5 pt}{10 pt}\selectfont 2}
				\rput[bl](0.99,6.5){\fontsize{5.5 pt}{10 pt}\selectfont 3}
				
				\rput[bl](2.65,6.85){\fontsize{5.5 pt}{3 pt}\selectfont r-1}
				\rput[bl](3.08,6.85){\fontsize{5.5 pt}{3 pt}\selectfont r}
				
				\rput[bl](2.7,6.5){\fontsize{5.5 pt}{3 pt}\selectfont r}
				\rput[bl](2.98,6.5){\fontsize{5.5 pt}{3 pt}\selectfont r+1}
				
				\rput[bl](3.35,6.85){\fontsize{5.5 pt}{3 pt}\selectfont r+1}
				\rput[bl](3.68,6.85){\fontsize{5.5 pt}{3 pt}\selectfont r+2}
				
				\rput[bl](3.35,6.5){\fontsize{5.5 pt}{3 pt}\selectfont r+2}
				\rput[bl](3.68,6.5){\fontsize{5.5 pt}{3 pt}\selectfont r+3}
				
				\rput[bl](4.75,6.85){\fontsize{5.5 pt}{3 pt}\selectfont s-1}
				\rput[bl](5.15,6.85){\fontsize{5.5 pt}{3 pt}\selectfont s}
				
				\rput[bl](4.85,6.5){\fontsize{5.5 pt}{3 pt}\selectfont s}
				\rput[bl](5.15,6.5){\fontsize{5.5 pt}{3 pt}\selectfont 1}
				
				\rput[bl](5.45,6.85){\fontsize{5.5 pt}{3 pt}\selectfont s+1}
				\rput[bl](5.78,6.85){\fontsize{5.5 pt}{3 pt}\selectfont s+2}
				
				\rput[bl](5.45,6.5){\fontsize{5.5 pt}{3 pt}\selectfont s+1}
				\rput[bl](5.78,6.5){\fontsize{5.5 pt}{3 pt}\selectfont s+2}
				
				\rput[bl](6.87,6.85){\fontsize{5.5 pt}{3 pt}\selectfont t-1}
				\rput[bl](7.3,6.85){\fontsize{5.5 pt}{3 pt}\selectfont t}
				
				\rput[bl](6.87,6.5){\fontsize{5.5 pt}{3 pt}\selectfont t-1}
				\rput[bl](7.3,6.5){\fontsize{5.5 pt}{3 pt}\selectfont t}

				\rput[bl](5.45,2.98){\fontsize{5.5 pt}{3 pt}\selectfont s+1}
				\rput[bl](5.78,2.98){\fontsize{5.5 pt}{3 pt}\selectfont s+2}
				
				\rput[bl](5.45,2.63){\fontsize{5.5 pt}{3 pt}\selectfont s+1}
				\rput[bl](5.78,2.63){\fontsize{5.5 pt}{3 pt}\selectfont s+2}
				
				\rput[bl](6.87,2.98){\fontsize{5.5 pt}{3 pt}\selectfont t-1}
				\rput[bl](7.3,2.98){\fontsize{5.5 pt}{3 pt}\selectfont t}
				
				\rput[bl](6.87,2.63){\fontsize{5.5 pt}{3 pt}\selectfont t-1}
				\rput[bl](7.3,2.63){\fontsize{5.5 pt}{3 pt}\selectfont t}

				\rput[bl](3.37,2.98){\fontsize{5.5 pt}{3 pt}\selectfont r-1}
				\rput[bl](3.75,2.98){\fontsize{5.5 pt}{3 pt}\selectfont r}
				
				\rput[bl](3.42,2.63){\fontsize{5.5 pt}{3 pt}\selectfont r}
				\rput[bl](3.68,2.63){\fontsize{5.5 pt}{3 pt}\selectfont r+1}
				
				\rput[bl](4.76,2.98){\fontsize{5.5 pt}{3 pt}\selectfont s-3}
				\rput[bl](5.12,2.98){\fontsize{5.5 pt}{3 pt}\selectfont s-2}
				
				\rput[bl](4.78,2.63){\fontsize{5.5 pt}{3 pt}\selectfont s-2}
				\rput[bl](5.12,2.63){\fontsize{5.5 pt}{3 pt}\selectfont s-1}

				\rput[bl](0.59,2.98){\fontsize{5.5 pt}{3 pt}\selectfont s-1}
				\rput[bl](0.99,2.98){\fontsize{5.5 pt}{3 pt}\selectfont s}
				
				\rput[bl](0.65,2.63){\fontsize{5.5 pt}{10 pt}\selectfont s}
				\rput[bl](0.99,2.63){\fontsize{5.5 pt}{10 pt}\selectfont 1}
				
				\rput[bl](2.66,2.98){\fontsize{5.5 pt}{3 pt}\selectfont r-3}
				\rput[bl](3.02,2.98){\fontsize{5.5 pt}{3 pt}\selectfont r-2}
				
				\rput[bl](2.68,2.63){\fontsize{5.5 pt}{3 pt}\selectfont r-2}
				\rput[bl](3,2.63){\fontsize{5.5 pt}{3 pt}\selectfont r-1}
				
				\rput[bl](0.56,2.28){\fontsize{5.5 pt}{3 pt}\selectfont s+1}
				\rput[bl](0.91,2.28){\fontsize{5.5 pt}{3 pt}\selectfont s+1}
				
				\rput[bl](0.54,1.93){\fontsize{5.5 pt}{10 pt}\selectfont s+2}
				\rput[bl](0.88,1.93){\fontsize{5.5 pt}{10 pt}\selectfont s+2}
				
				\rput[bl](2.64,2.28){\fontsize{5.5 pt}{3 pt}\selectfont s+1}
				\rput[bl](2.98,2.28){\fontsize{5.5 pt}{3 pt}\selectfont s+1}
				
				\rput[bl](2.64,1.93){\fontsize{5.5 pt}{3 pt}\selectfont s+2}
				\rput[bl](2.97,1.93){\fontsize{5.5 pt}{3 pt}\selectfont s+2}
				
				\rput[bl](0.56,0.97){\fontsize{5.5 pt}{3 pt}\selectfont t-1}
				\rput[bl](0.91,0.97){\fontsize{5.5 pt}{3 pt}\selectfont t-1}
				
				\rput[bl](0.64,0.63){\fontsize{5.5 pt}{10 pt}\selectfont t}
				\rput[bl](0.98,0.63){\fontsize{5.5 pt}{10 pt}\selectfont t}
				
				\rput[bl](2.64,0.97){\fontsize{5.5 pt}{3 pt}\selectfont t-1}
				\rput[bl](2.98,0.97){\fontsize{5.5 pt}{3 pt}\selectfont t-1}
				
				\rput[bl](2.74,0.63){\fontsize{5.5 pt}{3 pt}\selectfont t}
				\rput[bl](3.07,0.63){\fontsize{5.5 pt}{3 pt}\selectfont t}
				
				\rput[bl](1.7,5.2){\fontsize{13 pt}{1 pt}\color {gray} \selectfont A}
				\rput[bl](4.2,5.2){\fontsize{13 pt}{1 pt}\color {gray}\selectfont B}
				\rput[bl](6.3,5.2){\fontsize{13 pt}{1 pt}\color {gray}\selectfont C}
				\rput[bl](1.7,1.35){\fontsize{13 pt}{1 pt}\color {gray}\selectfont D}

				\rput[bl](0,1.4){\fontsize{12 pt}{3 pt}\selectfont t-s}
				\rput[bl](0.2,5.0){\fontsize{12 pt}{3 pt}\selectfont s}
				\rput[bl](1.85,7.2){\fontsize{12 pt}{3 pt}\selectfont r}
				\rput[bl](4.15,7.2){\fontsize{12 pt}{3 pt}\selectfont s-r}
				\rput[bl](6.25,7.2){\fontsize{12 pt}{3 pt}\selectfont t-s}
				
				\end{pspicture}
			}
			\centering
			\caption{Different parts and entries in latin Representation}
			\label{exp_table}
		\end{figure}
		%%%%%%%%%%%%%%%%%%%%%%%%%%%%%%%%
		
		Now, if we cover some cells of part $A$ and all cells in parts $B$, $C$, and $D$ by non-overlapped trades, then uncovered cells of part $A$ (if any) are triangles themselves, and we have a decomposition of $K_{r,s,t}$ into 5-cycles and triangles.
		%In this section we need some definition and lemma that we notice them in the following
		\begin{definitiona}
			In a latin representation of $G$ a \begin{sf}$k$-cell\end{sf} is a cell with entry $k$.
		\end{definitiona}
		\begin{definitiona}\label{def:kdiag}
			The \begin{sf}$k-$diagonal \end{sf} is the set of all cells with entry $k$ in part $A$.
		\end{definitiona}
		
		As we will see in the next sections, we have to define an ordering on the $k-$diagonals that is not the same as the numbers in the Latin representation. To do so, we assign labels to $k-$diagonals by introducing the bijective function $L: \{1, \ldots, s\} \rightarrow \{1, \ldots, s\}$, where $L(i)$ is the label associated with the $i-$diagonal. Since $L$ is a bijective function, $L^{-1}(i)$ is unique, which represents the entry of each cell in the diagonal with label $i$. For the sake of simplicity, we define $L^{-1}$. $L$ can be uniquely determined by inverting $L^{-1}$. Note that all the operations are computed modulo $s$.
		
		\[ L^{-1}(i) = 
		\begin{cases} 
		r+2 & i = 1 \\
		L^{-1}(i-1) + r - 3  & L^{-1}(i-1) + r - 3  \notin \{ L^{-1}(1), \ldots, L^{-1}(i-1) \} \\
		L^{-1}(i-1) + r - 5  & otherwise
		\end{cases}
		\]
		
		\begin{lemma}
			All the diagonals will be labeled by running the above procedure.
		\end{lemma}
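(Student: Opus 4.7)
Set $a := r-3$, $b := r-5 = a-2$, and $d := \gcd(a,s)$. The setup of this section restricts us to $r,s$ both odd (as odd multiples of $5$), so $a$ is even while $s$ is odd; hence $d$ divides an odd number and is itself odd. This oddness of $d$ is the crux of the argument.

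I interpret the procedure as a walk on the residues modulo $s$ starting at $x_0 := r+2$: at each step it tries to add $a$, falling back to $b = a-2$ exactly when the $+a$ move would collide with a previously visited position. The cyclic subgroup generated by $a$ has order $s/d$, partitioning the residues modulo $s$ into $d$ cosets of size $s/d$. First, by induction on the coset count $k$, I will show that whenever the walk enters a coset at a point $x_k$, the next $s/d - 1$ moves by $+a$ visit every remaining element of that coset with no fallback triggered --- this uses that $+a$ acts freely and transitively on the coset. The subsequent $+a$ move would return to $x_k$, which is already visited, so the procedure falls back, producing the new position $x_k + (s/d)a - 2 \equiv x_k - 2 \pmod{s}$; I take this as the entry point $x_{k+1}$ of the next coset.

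Hence the sequence of entry points is $x_k = r+2-2k$, and the corresponding coset indices are $(r+2-2k) \bmod d$ for $k = 0,1,\dots,d-1$. Since $d$ is odd, $\gcd(2,d) = 1$, so these $d$ values are pairwise distinct modulo $d$ and exhaust all cosets. It follows that the walk traverses the $d$ cosets in succession, each in full, producing $d \cdot (s/d) = s$ distinct values in total; thus $L^{-1}$ is a bijection on $\{1,\dots,s\}$ and every diagonal is labeled. The only delicate point is verifying that the first fallback in a coset occurs exactly at its exhaustion (never earlier, and never landing in an already-visited coset), which reduces to the disjointness of the cosets of $\langle a\rangle$ together with the multiplicative order of $a$ modulo $s$; the oddness of $d$ is the single structural ingredient actually doing work.
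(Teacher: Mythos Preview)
Your argument is correct and follows essentially the same route as the paper's proof: both set $d=\gcd(r-3,s)$, partition $\{1,\dots,s\}$ into the $d$ cosets of $\langle r-3\rangle$, observe that the $+(r-3)$ steps exhaust each coset before any collision, and then use $\gcd(d,2)=1$ (equivalently, $d$ odd) to conclude that the $-2$ fallbacks cycle through all $d$ cosets. Your write-up is in fact more explicit than the paper's, which leaves the inter-coset transition as ``it can be proved similarly''; your computation $x_{k+1}\equiv x_k-2\pmod s$ and the distinctness of the $x_k$ modulo $d$ make that step concrete.
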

		\begin{proof}
			Let $d = gcd(r-3,s)$. Divide diagonals into $d$ groups such that diagonal $i$ is a member of group $i\pmod d$($1\leq i\leq s$); therefore, size of each group is equal to $\frac{s}{d}$. On the grounds that $gcd(\frac{r-3}{d},\frac{s}{d})=1$, when we visit the first diagonal in an arbitrary group, in the next $\frac{s}{d}-1$ steps we visit all the diagonals in the same group and then we go back to the first visited diagonal of the group. Hence, if we label one diagonal of a group, all members of that group will be labeled. Additionally, since $gcd(d,2)=1$, it can be proved similarly that all the diagonals will be labeled.
		\end{proof}
		
		\begin{lemma}
			\label{rslimit}
			For each triple $(r,s,t)$ where $r\leq s\leq t$ and $t+18 \leq \frac{4rs}{r+s}$, we have $18\leq r,s$.
		\end{lemma}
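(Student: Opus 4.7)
The plan is to bootstrap from $t \geq s$ and the harmonic-mean style bound $\frac{4rs}{r+s} \leq 2s$ (which holds for any positive $r \leq s$, since $\frac{2rs}{r+s}$ is the harmonic mean of $r$ and $s$ and hence lies between them). Substituting this upper bound and $t \geq s$ into the hypothesis $t + 18 \leq \frac{4rs}{r+s}$ gives $s + 18 \leq 2s$, so $s \geq 18$ is immediate. This settles the bound on $s$ with almost no work.

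For the bound on $r$, I would start again from $s + 18 \leq t + 18 \leq \frac{4rs}{r+s}$, clear denominators, and collect terms to obtain
\[
s(s+18) \leq 3r(s-6).
\]
Since $s \geq 18$ from the previous step, in particular $s - 6 > 0$, so this rearranges to
\[
r \geq \frac{s(s+18)}{3(s-6)}.
\]
It then suffices to verify that the right-hand side is at least $18$. Cross-multiplying, this reduces to $s(s+18) \geq 54(s-6)$, i.e., $s^2 - 36s + 324 \geq 0$, which factors as $(s-18)^2 \geq 0$ and is trivially true. Combined with $r \leq s$, this gives $18 \leq r \leq s$, completing the proof.

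There is no real obstacle here: the argument is two short algebraic manipulations plus the observation that $(s-18)^2$ is a perfect square. The only point worth flagging is that the threshold $18$ in the hypothesis is exactly tight — the equality case of $(s-18)^2 \geq 0$ corresponds to $r = s = 18$ — so no slack is available, but none is needed.
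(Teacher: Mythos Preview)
Your argument is correct, but it takes a longer route for the bound on $r$ than the paper does. The paper uses the AM--GM inequality $\frac{4rs}{r+s} \leq r+s$ (equivalent to $4rs \leq (r+s)^2$) in place of your harmonic-mean bound $\frac{4rs}{r+s} \leq 2s$. Combined with $t \geq s$, this immediately yields
\[
s + 18 \leq t + 18 \leq r + s,
\]
so $r \geq 18$ in one line, and then $s \geq r \geq 18$ follows from the ordering. Your approach instead first establishes $s \geq 18$ and then goes through the rearrangement $s(s+18) \leq 3r(s-6)$ and the perfect-square identity $(s-18)^2 \geq 0$ to recover $r \geq 18$. Both are valid; the paper's version simply avoids the second algebraic computation by choosing the upper bound $r+s$ rather than $2s$, which isolates $r$ directly after cancelling $s$.
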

		\begin{proof}
			\begin{equation*}
				t+18 \leq \frac{4rs}{r+s} \leq r+s 
			\end{equation*}
			
			Where the last inequality is the AM-GM inequality for the numbers $r$ and $s$. 
			
			\begin{equation*}
				\Rightarrow 18\leq r,s
			\end{equation*}

		\end{proof}
		
		\begin{remark}
			In the next sections, all the calculations about rows' number are calculated in $\mathbb{N}_s$ except when we talk about part $D$.
		\end{remark}
		In the next sections, we use the above definitions and lemmas in order to refer to different parts of the latin representation and specific cells of it. 
		\section{Partial-trades}\label{sec::partial-trades}
		In this section, we cover parts $B$, $C$, and $D$ by partial-trades, and in the next section we will expand them to trades in the direction of proving the main theorem.
		\begin{definitiona}
			A \begin{sf}partial-trade\end{sf} is a group of cells in a latin representation that can be expanded to a trade together with some cells in part $A$.
		\end{definitiona}
		\subsection{Covering $B_1$ by partial-trades}
		In order to cover $B_1$ by partial-trades, we pair them up with some cells from part $C_2$ to form partial trades. For each odd number $i$ $(1\leq i\leq s-1)$, we take the two cells with entry $i$ in $B_1$ and the $(t-2)$-cells in their corresponding rows to form a partial-trade of type $1A$. All the odd cells of $B_1$ are in the different rows, so there is no intersection between these partial-trades. 
		
		On the contrary, it is possible that we have two cells with different even entry at the same row in part $B_1$. As such, we separate the even cells in part $B_1$ into two groups where in each part there is no two different cells even entry in the same row. In the first group, For each even number $i$ $(1\leq i\leq s)$ satisfying $1\le i\le r+1$ or $2r+4\le i\le s$, we consider two cells in $B_1$ with entry $i$ and the $t$-cells in their rows as a partial-trade. In the second group, for each even number satisfying $r+3\le i\le min\{ 2r+3,s\}$ we consider two cells in $B_1$ with entry $i$ and $(t-1)$-cells in corresponding rows as a partial-trade of type $1C$.
		
		Finally, in order to cover $s$-cells of $B_{1}$, for each $i$ $(1\leq i\leq \frac{s-r}{2})$, we create a partial-trade of type $1A$ by considering $s$-cells of $B_{1}$ in rows $2i-1$ and $2i$, and $(t-3)$-cells of column $t-3$ in rows $2i-1$ and $2i$. Since $s-r$ is even, all the $s$-cells of $B_{1}$ will be covered.
		
		As we have decomposed the part $B_1$ into smaller sets and then covered each of them with partial trades, we can conclude that there is no intersection between the mentioned partial trades in part $B_1$. Moreover, we have used cells from different columns of part $C_2$ to pair up with cells from different subsets of part $B_1$- i.e., $(t-2)$-cells for the set of odd sells in $B_1$, $t$-cells and $(t-1)$-cells for the two group of even cells in $B_1$, and $(t-3)$-cells the set of $s$-cells in $B_1$. As such, we can conclude that there is also no intersection between the mentioned partial trades in part $C_2$.

		\subsection{Covering $B_2$ by partial-trades}
		In order to cover $B_{2}$ by partial-trades, for each odd $i$ $(1\leq i\leq s-1)$, consider all cells with entry $i$ and $i+1$ in part $B_{2}$. These cells are in consecutive rows from $i+3$ to $(s-r)+i$. As such, for each $(2\leq j \leq \frac{s-r}{2})$ we create a partial-trade of type $1A$ with cells of rows $i+2j-1$ and $i+2j$.
		
		%%%%%%%%%%%%%%%%%%%%%%%%%%%%%%%%
		\begin{figure}[H]
			\psscalebox{0.7 0.7}
			{
				\begin{pspicture}(0,-4.275)(5.74,4.275)
				\definecolor{colour0}{rgb}{0.7,0.7,0.7}
				\definecolor{colour1}{rgb}{0.9,0.9,0.9}
				\psframe[linecolor=black, linewidth=0.04, fillstyle=solid,fillcolor=colour0, dimen=outer](5.2,2.925)(4.8,2.525)
				\psframe[linecolor=black, linewidth=0.04, fillstyle=solid,fillcolor=colour0, dimen=outer](5.2,2.525)(4.4,2.125)
				\psline[linecolor=black, linewidth=0.04](4.8,2.925)(4.8,2.125)(4.8,2.125)
				\psframe[linecolor=black, linewidth=0.04, fillstyle=solid,fillcolor=colour1, dimen=outer](4.8,2.125)(4.0,1.725)
				\psframe[linecolor=black, linewidth=0.04, fillstyle=solid,fillcolor=colour1, dimen=outer](4.4,1.725)(3.6,1.325)
				\psline[linecolor=black, linewidth=0.04](4.4,2.125)(4.4,1.725)(4.4,1.725)
				\psline[linecolor=black, linewidth=0.04](4.0,1.725)(4.0,1.325)
				\psframe[linecolor=black, linewidth=0.04, fillstyle=solid,fillcolor=colour0, dimen=outer](4.0,1.325)(3.2,0.925)
				\psframe[linecolor=black, linewidth=0.04, fillstyle=solid,fillcolor=colour0, dimen=outer](3.6,0.925)(2.8,0.525)
				\psline[linecolor=black, linewidth=0.04](3.6,0.925)(3.6,1.325)
				\psline[linecolor=black, linewidth=0.04](3.2,0.525)(3.2,0.925)
				\psframe[linecolor=black, linewidth=0.04, dimen=outer](5.2,3.725)(0.4,-4.275)
				\psframe[linecolor=black, linewidth=0.04, dimen=outer](1.2,-1.475)(0.4,-1.875)
				\psframe[linecolor=black, linewidth=0.04, dimen=outer](1.6,-1.075)(0.8,-1.475)
				\psline[linecolor=black, linewidth=0.04](1.2,-1.075)(1.2,-1.475)
				\psline[linecolor=black, linewidth=0.04](0.8,-1.475)(0.8,-1.875)
				\psframe[linecolor=black, linewidth=0.04, fillstyle=solid,fillcolor=colour1, dimen=outer](0.8,-1.875)(0.4,-2.275)
				\rput[bl](2.5,4.0){\fontsize{13 pt}{1 pt}\color {black} \selectfont s-r}
				\rput[bl](0.0,0.525){\fontsize{13 pt}{1 pt}\color {black} \selectfont s}
				\rput[bl](5.3,2.625){i+1}
				\rput[bl](5.3,2.255){i+2}
				
				\rput[bl](4.95,2.625){\fontsize{7 pt}{1 pt}\color {black} \selectfont i}
				\rput[bl](4.55,2.225){\fontsize{7 pt}{1 pt}\color {black} \selectfont i}
				\rput[bl](4.15,1.825){\fontsize{7 pt}{1 pt}\color {black} \selectfont i}
				\rput[bl](3.75,1.425){\fontsize{7 pt}{1 pt}\color {black} \selectfont i}
				\rput[bl](3.35,1.025){\fontsize{7 pt}{1 pt}\color {black} \selectfont i}
				\rput[bl](2.95,0.625){\fontsize{7 pt}{1 pt}\color {black} \selectfont i}
				
				\rput[bl](0.95,-1.325){\fontsize{7 pt}{1 pt}\color {black} \selectfont i}
				\rput[bl](0.55,-1.725){\fontsize{7 pt}{1 pt}\color {black} \selectfont i}
				\rput[bl](1.25,-1.325){\fontsize{6 pt}{1 pt}\color {black} \selectfont i+1}
				\rput[bl](0.85,-1.725){\fontsize{6 pt}{1 pt}\color {black} \selectfont i+1}
				\rput[bl](0.45,-2.125){\fontsize{6 pt}{1 pt}\color {black} \selectfont i+1}

				\rput[bl](4.85,2.225){\fontsize{6 pt}{1 pt}\color {black} \selectfont i+1}
				\rput[bl](4.45,1.825){\fontsize{6 pt}{1 pt}\color {black} \selectfont i+1}
				\rput[bl](4.05,1.425){\fontsize{6 pt}{1 pt}\color {black} \selectfont i+1}
				\rput[bl](3.65,1.025){\fontsize{6 pt}{1 pt}\color {black} \selectfont i+1}
				\rput[bl](3.25,0.625){\fontsize{6 pt}{1 pt}\color {black} \selectfont i+1}
				\end{pspicture}
			}
			\centering
			\caption{Covering the $B_2$ cells with partial-trades}
			\label{exp_tablediagonal}
		\end{figure}
		%%%%%%%%%%%%%%%%%%%%%%%%%%%%%%%%
		
		As the part $B$ is the union of two parts $B_1$ and $B_2$, we have covered all the cells in part $B$ by partial-trades of type $1A$ and $1C$ up to this point.
		\subsection{Covering $C_2$ and $D_2$ by partial-trades}
		\label{covC_2andD_2}
		In this section, we try to cover the rest of the cells in the four columns $t-3$ to $t$ by partial trades. According to the current placements of partial-trades, it can be concluded that for each row in the latin representation zero or two cells of part $C_2$ are not covered. Consider \begin{sf}$f(i,j)$\end{sf} as the number of rows with uncovered cells in columns $i$ and $j$ where $t-3 \leq i < j \leq t$. Doing some simple calculation, the value of $f$ can be computed as follows:
		\begin{enumerate}
			\item $s < 2r+4$:
			\begin{center}
				$f(t-3,t-1)=2r+2-s+\frac{s-r-2}{2}, f(t-3,t)=\frac{s-r-2}{2}$
				\\
				$f(t-2,t-1)=1,f(t-1,t)=\frac{s-r}{2}$
			\end{center}
			\item $s \geq 2r+4$:
			\begin{center}
				$f(t-3,t-1)=\frac{2r+3-(s-r)-1}{2}+s-(2r+3), f(t-3,t)=\frac{2r+4-(s-r)}{2}$
				\\
				$f(t-2,t-1)=1, f(t-1,t)=\frac{s-r}{2}$
			\end{center}
		\end{enumerate}
		Moreover, The value of the function $f$ for all other combinations of the last 4 columns that were not mentioned in the above equation equals 0.\\
		If $\frac{s-r-2}{2}$ is an odd number, the parity of $f(t-3,t-1)$, $f(t-3,t)$, $f(t-2,t-1)$ and $f(t-1,t)$  are even, odd, odd and even, respectively. If, however, $\frac{s-r-2}{2}$ is an even number, the parity of $f(t-3,t-1)$, $f(t-3,t)$, $f(t-2,t-1)$ and $f(t-1,t)$  are odd, even, odd and odd, respectively.
		%%%%%%%%%%%%%%%%%%%%%%%%%%%%%%%%
		\begin{figure}[H]
			\psscalebox{0.8 0.8} 
			{
				\begin{pspicture}(0,-2.11)(4.8,2.11)
				\definecolor{colour1}{rgb}{0.0,0.0,0.2}
				\psframe[linecolor=black, linewidth=0.04, dimen=outer](4.8,1.49)(0.0,-2.11)
				\psline[linecolor=black, linewidth=0.04](1.2,-2.11)(1.2,1.49)
				\psline[linecolor=black, linewidth=0.04](2.4,1.49)(2.4,-2.11)
				\psline[linecolor=black, linewidth=0.04](3.6,-2.11)(3.6,1.49)
				\psline[linecolor=black, linewidth=0.04](0.0,0.29)(4.8,0.29)
				\psline[linecolor=black, linewidth=0.04](4.8,-0.91)(0.0,-0.91)
				\psframe[linecolor=black, linewidth=0.04, fillstyle=solid,fillcolor=colour1, dimen=outer](4.8,1.49)(3.6,0.29)
				\psframe[linecolor=black, linewidth=0.04, fillstyle=solid,fillcolor=colour1, dimen=outer](2.4,1.49)(1.2,0.29)
				\psframe[linecolor=black, linewidth=0.04, fillstyle=solid,fillcolor=colour1, dimen=outer](2.4,0.29)(1.2,-0.91)
				\psframe[linecolor=black, linewidth=0.04, fillstyle=solid,fillcolor=colour1, dimen=outer](1.2,0.29)(0.0,-0.91)
				\psframe[linecolor=black, linewidth=0.04, fillstyle=solid,fillcolor=colour1, dimen=outer](2.4,-0.91)(1.2,-2.11)
				\psframe[linecolor=black, linewidth=0.04, fillstyle=solid,fillcolor=colour1, dimen=outer](3.6,-0.91)(2.4,-2.11)
				\rput[bl](4.0,1.69){\fontsize{10 pt}{3 pt}\selectfont t}
				\rput[bl](2.8,1.69){\fontsize{10 pt}{3 pt}\selectfont t-1}
				\rput[bl](1.6,1.69){\fontsize{10 pt}{3 pt}\selectfont t-2}
				\rput[bl](0.4,1.69){\fontsize{10 pt}{3 pt}\selectfont t-3}
				\end{pspicture}
			}
			\centering
			\caption{The position of covered cells where $\frac{s-r-2}{2}$ is even}
			\label{exp_t_3_t_1}
		\end{figure}
		%%%%%%%%%%%%%%%%%%%%%%%%%%%%%%%%
		Now, we classify the rows with two uncovered cells in part $C_{2}$ based on the columns that include the two uncovered cells. The cardinality of each class of rows has been specified by the function $f$. 
		Please recall when we identified partial-trades for part $B_{1}$, we used some of the cells in part $C_{2}$. Now we cover all the uncovered cells from part $C_{2}$ by partial-trades. For this purpose, we use cells of part $D_2$. Consider the problem in the following cases:
		\begin{enumerate}[I)]
			\item $s\neq r+2$:\\
			Define $l^{i,j}_{k}$ as the $k$-th row with uncovered cells in columns $i$ and $j$. If $\frac{(s-r-2)}{2}$ is odd, we consider two partial-trades of type 1B as follows:
			\begin{center}
				$pt_1=\lbrace(l^{t-2,t-1}_{1},t-2),(l^{t-2,t-1}_{1},t-1),(t-2,1),(t-1,1)\rbrace$\\
				$pt_2=\lbrace (l^{t-3,t}_{1},t-3),(l^{t-3,t}_{1},t),(t-3,1),(t,1) \rbrace$
			\end{center}  
			If $\frac{(s-r-2)}{2}$ is even, we take another partial-trade of type 1E. The new partial-trade is:
			\begin{equation*}
				\begin{split}
					& pt_3 = \lbrace (l^{t-3,t-1}_1, t-3), (l^{t-3,t-1}_1, t-1),\\  & (l^{t-3,t}_2, t-3),
					(l^{t-3,t}_2, t), (l^{t-1,t}_1, t-1), (l^{t-1,t}_1, t)\rbrace	
				\end{split}
			\end{equation*}
			%%%%%%%%%%%%%%%%%%%%%%%%
			%% TODO
			%%% YADEMOOON bashe badan in ro expand konim
			%$t_1 = pt_1 \cup \lbrace (l_1, 2), (l_1-1, 2)\rbrace$\\
			%$t_2 = pt_2 \cup \lbrace (l_2, 2), (l_2-1, 2)\rbrace$\\
			%$t_3 = pt_3 \cup \lbrace (l_3, 2), (l_4, 2), (l_5,2)\rbrace$\\
			
			%%%%%%%%%%%%%%%%%%%%%%%%
			
			\item $s=r+2$:\\
			%%TODO
			In this case, we can use $(t-1)$-cells instead of $(t)$-cells in their corresponding partial-trades in rows $3$ and $4$ of part $C_2$. Then, we use the same method as the case when $s\neq r+2$.
		\end{enumerate}
		It is evident that after doing the above operations, the value of $f$ is even for all pair of 4 columns in $C_2$. Now start from row $1$ and pair each row with two uncovered cells with the next row with uncovered cells in the same columns as a partial-trade of type $1A$. We can observe that any two paired rows have distance at most $2$.\\
		To cover part $D_2$ by partial-trades, For each $i(1\leq i \leq r)$, we know that either both or none of the two cells $(i,t-3)$ and $(i,t)$ were covered by partial-trades. The same applies to the rows $t-2$ and $t-1$. Hence, we can easily partition the remaining cells of these paired rows by partial-trades of type $1A$.\\
		\subsection{Covering $C_1$ and $D_1$ by partial-trades}
		\label{sec::c1andd1}
		Up to this point, all cells have been covered by partial trades except for the cells in parts $C_1$ and $D_1$. Define $l = (t-4)-(s+1) + 1$. As $l$ is even, we can pair row $s+1$ with row $s+2$, row $s+3$ with row $s+4$, ..., and row $t-5$ with row $t-4$. Columns can be paired following the same procedure; i.e., column $s+1$ with column $s+2$, column $s+3$ with column $s+4$, ..., and column $t-5$ with column $t-4$. Now, for each pair of matched rows and matched columns with the same set of values in their cells, we use one $1B$ partial-trade - the exact placement of this partial-trade will be specified later in section \ref{tradec1d1}. Until now, in each pair of matched-columns there is exactly one row which their intersection used the $1B$ partial-trade. In $C_1$, by considering the partial-trade $1B$ in each matched columns, $s-1$ uncovered cells remains. If we consider the row of partial-trade $1B$ as $x$, we can match row $x+1$ with row $x+2$, row $x+3$ with row $x+4 $, $...$, and row $x+s-2$ with row $x+s-1$; therefore, each pair of rows form a $1A$ partial-trade. We use the same method to cover the cells of part $D_1$.
		
		\section{Expanding to trades}\label{sec::expand to trades}
		In this section, we expand the considered partial-trades to trades. First, we state some definitions and lemmas.
		\begin{definitiona}
			A \begin{sf}dual cell\end{sf} is a pair of cells in a diagonal in two consecutive rows of part $A$. We use the notation \begin{sf}$DC_{v,i}$\end{sf} to denote the dual cell consisting of cells in the diagonal $v$ and rows $i$ and $i+1$.
		\end{definitiona}
		\begin{definitiona} 
			The relation `$\leq$' on dual cells is as follows:
			\begin{equation*}
				DC_{v,i} \leq DC_{v',i'} \iff L(v) < L(v')\ \vee\ (v = v'\ \wedge\ (v - i)  \geq (v' - i')\pmod s)
			\end{equation*}
		\end{definitiona}
		This relation also defines a relation with which the set of dual cells will be a totally ordered set.
		\begin{definitiona}
			A dual cell $DC_{v,i}$ is \begin{sf}odd\end{sf} if $i$ is odd and it is \begin{sf}even\end{sf} if $i$ is even.
		\end{definitiona}
		%%%%%%%%%%%%%%%%%%%%%%%%%%%%%%%%
		% TODO
		\begin{figure}[H]
			\psscalebox{0.6 0.6}
			{
				\begin{pspicture}(0,-5.8)(10.4,5.8)
				\definecolor{colour0}{rgb}{0.8,0.8,0.8}
				\psframe[linecolor=black, linewidth=0.04, dimen=outer](4.8,5.8)(0.0,0.2)
				\psframe[linecolor=black, linewidth=0.04, dimen=outer](10.4,5.8)(5.6,0.2)
				\psframe[linecolor=black, linewidth=0.04, fillstyle=solid,fillcolor=colour0, dimen=outer](1.2,3.0)(0.8,2.6)
				\psframe[linecolor=black, linewidth=0.04, fillstyle=solid,fillcolor=colour0, dimen=outer](1.6,3.4)(1.2,3.0)
				\psframe[linecolor=black, linewidth=0.04, dimen=outer](2.0,3.8)(1.6,3.4)
				\psframe[linecolor=black, linewidth=0.04, dimen=outer](2.4,4.2)(2.0,3.8)
				\psframe[linecolor=black, linewidth=0.04, dimen=outer](6.8,3.8)(6.4,3.4)
				\psframe[linecolor=black, linewidth=0.04, dimen=outer](7.2,4.2)(6.8,3.8)
				\psframe[linecolor=black, linewidth=0.04, fillstyle=solid,fillcolor=colour0, dimen=outer](10.0,3.4)(9.6,3.0)
				\psframe[linecolor=black, linewidth=0.04, fillstyle=solid,fillcolor=colour0, dimen=outer](9.6,3.0)(9.2,2.6)
				\psline[linecolor=black, linewidth=0.03](0.4,0.2)(0.4,5.8)
				\psline[linecolor=black, linewidth=0.03](0.8,5.8)(0.8,0.2)
				\psline[linecolor=black, linewidth=0.03](1.2,0.2)(1.2,5.8)
				\psline[linecolor=black, linewidth=0.03](1.6,5.8)(1.6,0.2)
				\psline[linecolor=black, linewidth=0.03](2.0,0.2)(2.0,5.8)
				\psline[linecolor=black, linewidth=0.03](2.4,5.8)(2.4,0.2)
				\psline[linecolor=black, linewidth=0.03](0.0,4.2)(4.8,4.2)
				\psline[linecolor=black, linewidth=0.03](4.8,3.8)(0.0,3.8)
				\psline[linecolor=black, linewidth=0.03](0.0,3.4)(4.8,3.4)
				\psline[linecolor=black, linewidth=0.03](4.8,3.0)(0.0,3.0)
				\psline[linecolor=black, linewidth=0.03](0.0,2.6)(4.8,2.6)
				\psline[linecolor=black, linewidth=0.03](4.8,2.2)(0.0,2.2)
				\psline[linecolor=black, linewidth=0.03](5.6,4.2)(10.4,4.2)
				\psline[linecolor=black, linewidth=0.03](10.4,3.8)(5.6,3.8)
				\psline[linecolor=black, linewidth=0.03](5.6,3.4)(10.4,3.4)
				\psline[linecolor=black, linewidth=0.03](10.4,3.0)(5.6,3.0)
				\psline[linecolor=black, linewidth=0.03](5.6,2.6)(10.4,2.6)
				\psline[linecolor=black, linewidth=0.03](6.0,5.8)(6.0,0.2)
				\psline[linecolor=black, linewidth=0.03](6.4,0.2)(6.4,5.8)
				\psline[linecolor=black, linewidth=0.03](6.8,5.8)(6.8,0.2)
				\psline[linecolor=black, linewidth=0.03](7.2,0.2)(7.2,5.8)
				\psline[linecolor=black, linewidth=0.03](9.2,0.2)(9.2,5.8)
				\psline[linecolor=black, linewidth=0.03](9.6,5.8)(9.6,0.2)
				\psline[linecolor=black, linewidth=0.03](10.0,0.2)(10.0,5.8)
				\psframe[linecolor=black, linewidth=0.04, dimen=outer](7.6,-0.2)(2.8,-5.8)
				\psframe[linecolor=black, linewidth=0.04, dimen=outer](4.4,-2.6)(4.0,-3.0)
				\psframe[linecolor=black, linewidth=0.04, dimen=outer](4.8,-2.2)(4.4,-2.6)
				\psframe[linecolor=black, linewidth=0.04, fillstyle=solid,fillcolor=colour0, dimen=outer](7.6,-3.0)(7.2,-3.4)
				\psframe[linecolor=black, linewidth=0.04, fillstyle=solid,fillcolor=colour0, dimen=outer](7.2,-3.4)(6.8,-3.8)
				\psline[linecolor=black, linewidth=0.03](3.2,-5.8)(3.2,-0.2)
				\psline[linecolor=black, linewidth=0.03](3.6,-0.2)(3.6,-5.8)
				\psline[linecolor=black, linewidth=0.03](4.0,-5.8)(4.0,-0.2)
				\psline[linecolor=black, linewidth=0.03](4.4,-0.2)(4.4,-5.8)
				\psline[linecolor=black, linewidth=0.03](4.8,-5.8)(4.8,-0.2)
				\psline[linecolor=black, linewidth=0.03](7.2,-5.8)(7.2,-0.2)
				\psline[linecolor=black, linewidth=0.03](6.8,-0.2)(6.8,-5.8)
				\psline[linecolor=black, linewidth=0.03](7.6,-3.8)(2.8,-3.8)
				\psline[linecolor=black, linewidth=0.03](2.8,-3.4)(7.6,-3.4)
				\psline[linecolor=black, linewidth=0.03](7.6,-3.0)(2.8,-3.0)
				\psline[linecolor=black, linewidth=0.03](2.8,-2.6)(7.6,-2.6)
				\psline[linecolor=black, linewidth=0.03](7.6,-2.2)(2.8,-2.2)
				\psline[linecolor=black, linewidth=0.03](6.4,-5.8)(6.4,-0.2)
				\psline[linecolor=black, linewidth=0.03](2.8,-4.2)(7.6,-4.2)
				\end{pspicture}
			}
			\centering
			\caption{Examples of consecutive dual cells}
			\label{exp_nextdualcell}
		\end{figure}
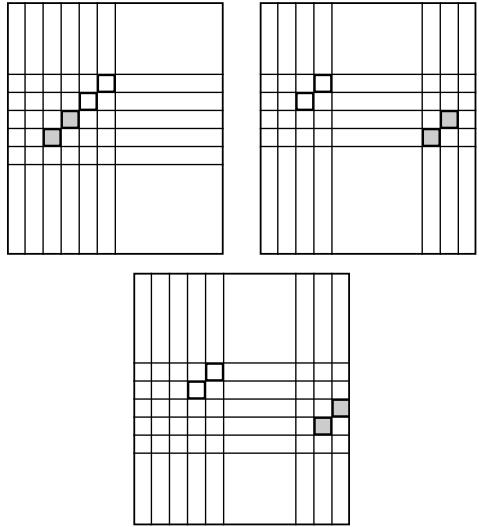
		%%%%%%%%%%%%%%%%%%%%%%%%%%%%%%%%
		%% koochektarini ke ma hal mikonim 75 85 105 => 15 17 21
		
		In the previous steps, all cells in parts $B$, $C$, and $D$ were covered by partial-trades. In this step, we provide a method to expand them to trades. At first we expand partial-trades of parts $B_1$, $C_2$, and $D_2$. After that, we expand partial-trades of part $B_2$. Finally, we expand partial-trades of parts $C_1$ and $D_1$.
		\begin{lemma}\label{lemma:exis}
			There exists a number $x$ such that both diagonals with entries  $x$ and $x + r + 1 \pmod s$ have labels less than $s-2$.
		\end{lemma}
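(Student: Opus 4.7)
My plan is a short pigeonhole argument that ignores the recursive definition of $L$ and uses only that $L$ is a bijection on $\{1,\dots,s\}$. The first step will be to isolate the set of ``heavy'' diagonals, namely
\[
H := \{L^{-1}(s-2),\ L^{-1}(s-1),\ L^{-1}(s)\},
\]
consisting of the three diagonal entries whose label is at least $s-2$; because $L$ is a bijection, $|H| = 3$ exactly.

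The second step will be to rephrase the conclusion as a counting statement. A value $x \in \{1,\dots,s\}$ fails the conclusion precisely when $x \in H$ (so the diagonal with entry $x$ is heavy) or when $x+r+1 \pmod s \in H$ (so the diagonal with entry $x+r+1$ is heavy). The latter condition is equivalent to $x$ lying in the shifted set $H' := H - (r+1) \pmod s$, which again has exactly three elements. Consequently, the set of $x$ that fail the conclusion is contained in $H \cup H'$, which has size at most $6$.

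The third step will be to invoke Lemma~\ref{rslimit}: the paper's standing hypothesis $t+90 \le \tfrac{4rs}{r+s}$ certainly implies $t+18 \le \tfrac{4rs}{r+s}$, so $s \ge 18 > 6$. Therefore $\{1,\dots,s\} \setminus (H \cup H')$ is non-empty, and any element of it serves as the required witness $x$.

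I do not foresee a genuine obstacle: the argument depends only on bijectivity of $L$ together with the size lower bound on $s$, not on the intricate case split in the definition of $L^{-1}$. The only mild subtlety is taking the shift of $H$ modulo $s$ when forming $H'$ (so that $H'$ really is a subset of $\{1,\dots,s\}$) and noticing that $H$ and $H'$ may overlap, which only tightens the count.
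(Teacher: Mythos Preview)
Your argument is correct and is essentially the same pigeonhole argument the paper gives: both count at most six ``bad'' values of $x$ (three from each coordinate of the pair) and then invoke Lemma~\ref{rslimit} to get $s \ge 18 > 6$. Your formulation is in fact a bit more precise than the paper's, since you explicitly use the bijectivity of $L$ to identify the heavy set $H$ and its shift $H'$, whereas the paper's wording conflates entries with labels.
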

		\begin{proof}
			Consider all pairs $(i, i+r+1 \pmod s)$ with $1\leq i\leq s$. First note that there are at most 6 pairs $\{i, i+r+1\pmod s\}$ with either $i$ or $i+r+1$ being greater than or equal to $s-2$; i.e., being $s$, $s-1$, or $s-2$. Moreover, according to Lemma \ref{rslimit} we have $s \geq 18$, so by the pigeonhole theorem there is at least one $x$ that satisfies the lemma.
		\end{proof}
		
		\subsection{Expanding the partial trades for part $B_1$ and $C_1$ to trades}
		In order to avoid cluttering, all the numbers in this paragraph are modulo $s$. Consider an integer $x$ that satisfies the condition lemma \ref{lemma:exis}. In order to expand partial-trades of parts $B_1$ and $C_2$, we consider the diagonals $x+r+1$ and $x$ simultaneously. Then, for expanding partial-trades consisting of even cells in $B_1$, we expand partial-trades consisting of 2-cells with the first cells of diagonals $x+r+1$ and $x$. In general we expand partial-trades consisting of $2i$-cells with $(2i-1)$th cells of diagonals $x+r+1$ and $x$. Note that if we reach the cells in column $2$ of the diagonals, before using these cells, we consider their next diagonal, according to the label ordering, and without using the two first columns of $A$, we continue the procedure of expanding partial-trades by first cells of these two new diagonals.
		
		In section \ref{sec::partial-trades}, We have used two partial-trades of type $1B$ and at most one partial-trade of type $1E$, to change the parity of all the values of $f$. To expand partial-trade $1E$ we use the second column of part $A$, and to expand partial-trades $1B$ we use the first and second column of part $A$. Given that, expanded cells from part $A$ are uniquely determined and it is easy to check that all the obtained trades do not overlap.
		\begin{lemma}
			In the above procedure, there is no repeated diagonal.
		\end{lemma}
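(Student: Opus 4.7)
The plan is to show that the procedure visits two sequences of diagonals in parallel---one starting from $x$, the other from $x+r+1 \pmod s$---each advancing through consecutive labels under $L$, and then to verify both that neither sequence repeats internally and that the two sequences are disjoint.

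First I would exploit the fact that each diagonal-switch replaces a pair $(v,\, v+r+1)$ by $(L^{-1}(L(v)+1),\, L^{-1}(L(v+r+1)+1))$. Because $L$ is a bijection on $\{1,\dots,s\}$, the diagonals visited in the sequence starting at $x$ are $L^{-1}(L(x)), L^{-1}(L(x)+1), L^{-1}(L(x)+2),\ldots$, which are pairwise distinct as soon as the successive label arguments are distinct in $\{1,\dots,s\}$; the same holds for the sequence starting at $x+r+1$. Hence repetition within a single sequence can only come from wrap-around modulo $s$, and collision between the two sequences can only happen if the two label-ranges overlap.

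To control the number of diagonal-switches, I would count cells: each pair of active diagonals supplies on the order of $(r-2)/2$ odd-position cells once columns $1$ and $2$ are excluded, while the number of partial-trades of $B_1$ (for even entries, odd entries, and $s$-cells) that need to be expanded is on the order of $s$. So the number of switches the procedure performs is $O(s/r)$, which, by Lemma \ref{rslimit}, is dwarfed by $s$.

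Finally I would invoke Lemma \ref{lemma:exis}: the chosen $x$ satisfies $L(x) < s-2$ and $L(x+r+1)<s-2$, leaving a buffer of at least three labels at the top of the ordering, which is enough to guarantee that both label-ranges used by the procedure remain inside $\{1,\dots,s\}$ and therefore do not wrap. Since $L(x) \neq L(x+r+1)$ (as $r+1 \not\equiv 0 \pmod s$) and each label-range is short, the two ranges stay disjoint, so no diagonal is visited twice. The main obstacle will be the bookkeeping needed to make the step-count bound tight enough: one has to fold in the auxiliary partial-trades of types $1B$ and $1E$ introduced in Section \ref{sec::partial-trades} to rebalance the parities of $f$, since those add a small number of extra expansions that could, if not controlled, push a label-range past the buffer promised by Lemma \ref{lemma:exis}.
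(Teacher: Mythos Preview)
Your argument has a real gap at the crucial step: you never establish that the two label-ranges, the one starting at $L(x)$ and the one starting at $L(x+r+1)$, are disjoint from each other. Lemma~\ref{lemma:exis} tells you only that both starting labels lie below $s-2$; it says nothing about their distance from one another. Nothing you have written rules out, for instance, $L(x+r+1)=L(x)+1$, in which case the very first diagonal-switch in the $x$-sequence lands on the starting diagonal of the $(x+r+1)$-sequence. The sentence ``since $L(x)\ne L(x+r+1)$ and each label-range is short, the two ranges stay disjoint'' is precisely the unjustified leap.

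The paper closes this gap by working with diagonal \emph{values} rather than labels. Consecutive labels correspond to values that differ by $r-3$ (the exceptional $r-5$ step only appears after a full pass through a residue class, which the paper argues does not occur within the span of this procedure). Hence if the sequence beginning at $x$ reached the diagonal with value $x+r+1$ after $y$ label-steps, one would have $y(r-3)\equiv r+1\pmod s$ with $0<y(r-3)<s$, forcing $y(r-3)=r+1$, impossible once $r$ is large (Lemma~\ref{rslimit} gives $r\ge 18$). In the other direction one would need $y(r-3)=s-r-1$, which fails on parity since $r-3$ is even while $s-r-1$ is odd. This arithmetic is exactly what your ``the ranges are short'' is standing in for; to bound $|L(x)-L(x+r+1)|$ from below you must invert the labeling relation, and doing so leads straight back to these two congruences. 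A secondary issue: your wrap-around argument compares the number of switches to $s$, but the relevant comparison is to the buffer of three labels supplied by Lemma~\ref{lemma:exis}; Lemma~\ref{rslimit} by itself does not give a bound of the form $s/r<3$, you need to invoke the third necessary condition for that.
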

		\begin{proof}
			First note that as it was mentioned in the labeling procedure, the distance between consecutive diagonals is either $r-5$ or $r-3$.
			Without loss of generality, we can assume that the repeated diagonal is either $x$ or $x+r+1$; i.e., we start from one of the diagonals with label $x$ or $x+r+1$, we keep moving to the next diagonal, based on the label ordering, until we reach the other diagonal. We define $y_1$ and $y_2$ as the number of pairs of consecutive diagonals with distance $r-5$ and $r-3$ in the mentioned process, respectively. First, note that the diagonals used in the process of reaching from a diagonal $x$ or $x+r+1$ to the other one do not cover the entire rows of the table and so $y_1=0$ and $y_2(r-3) < s$. Therefore, this is sufficient for us to consider the two following cases to conclude the proof.
			\begin{itemize}
				\item From $x$ to $x+r+1$: in this case, $y_2(r-3) = s-r-1\pmod s$.
				We can conclude $y_2(r-3) < s$ and $y_2(r-3) = s-r-1\pmod s$ that $y_2(r-3) = s-r-1$, but the two sides of equation have different parity.
				\item From $r+2$ to 1: in this case, $y_2(r-3) = r+1 \pmod s$.
				According to Lemma \ref{rslimit}, $r > 5$, so the above equation is not possible.
			\end{itemize}
		\end{proof}
		\\For expanding partial-trades consisting odd cells of part $B_1$ except $s$-cells, we consider first unused even dual cell, $DC_{v,i}$, and expand the partial-trade in row $i$ and $i+1$ that includes odd cells in part $B_1$. We do the same for next dual cells and continue until all of these partial-trades are expanded.\\
		Now, we expand the partial-trades that include $s$-cells in $B_1$. To expand these partial-trades, we start from the first unused odd dual cell, $DC_{v,i}$. Expand the partial-trade consisting $s$-cells in rows $i$ and $i+1 \pmod s$ (if exist) and go to next dual cell in part $A$ until we reach the last row. Then, from next 2 dual cells that are in rows ($s$,$1$) and rows ($2$,$3$), we skip the cell in part $A$ in the row $s$. Now, consider the cells in the first and the second rows as a dual cell and then consider s-cells in the same row of this dual cell as a trade and go to next dual cell and identify trades until all s-cells in part $B_1$ are covered with trades.
		
		We know that rows of partial-trades of this part have distance at most 2. Now, according to the procedure of expansion of partial-trades of part $B_1$, there are some unused cells in diagonals of part $A$. We expand partial-trades of part $C_2$ whose rows have distance 2 using the cells in part $A$.
		For expanding partial-trades of part $C_1$ whose rows have distance 1, we can start from the first unused dual cell and keep moving to the next dual cell until we cover all those partial-trades. Note that it may happen that some cells will be unused in this procedure, and as we have mentioned earlier that in each row of part $C_2$ there are 0 or 2 cells which are not covered by partial-trades of part $B_1$. As such, in this part of algorithm, we are passing through at most $s$ cells except cells in 3 first diagonals to cover each row of the table.
		
		According to cases for $s$ mentioned in section \ref{covC_2andD_2} and identified partial-trades, all the partial-trades that has an intersection with the part $B_1$ are expanded to trades by the above procedure.
		\subsection{Expanding the partial trades for part $B_2$ to trades}
		
		For expanding partial-trades of part $B_2$, we partition partial-trades into two groups:
		\begin{enumerate}
			\item partial-trades with odd top row:\\
			Consider $a=\frac{s-r-2}{2}$, then the number of partial-trades in rows $2k-1$ and $2k$ where $(1\leq k \leq \frac{s-1}{2})$ is equal to $max\{0,a-k+1\}$.\\
			\item partial-trades with even top row:\\
			Consider $a=\frac{s-r-2}{2}$, then the number of partial-trades in rows $2k$ and $2k+1$ where $(1\leq k \leq \frac{s-1}{2})$ is equal to $min\{k-1,a\}$.\\
		\end{enumerate}
		
		Notice that the union of $DC_{v,i}$ with partial-trades of type $1A$ whose cells are in rows $i$ and $i+1$, form a trade of type $1A$. Given that, the following paragraph shows the procedure of expanding partial-trades of $B_2$. 
		\newline
		Define $\psi(x,i)$, where $x$ is a dual cell $DC_{v,r}$ and $i$ is a non-negative integer, as a function returning the smallest unused dual cell which is greater than or equal to $x$ and it has a form $DC_{., r+ i \pmod s}$. Note that $\psi$ is not a predetermined function and its value depends on the current usage of dual cells.
		
		Let $T$ be a sequence of dual cells with length $a(r+1)$ where $T(1)$ is the minimum unused dual cell whose cells are in rows 1 and 2 and for T(i) ($1 < i \leq a(r+1)$):
		\begin{equation*}
			T_{i} = \psi(T_1, 2(i-1) + \lfloor\frac{(i + (\frac{s-3}{2}-a-1)}{r}\rfloor )
		\end{equation*}
		With a simple calculation we can find out that the marked dual cells identified by the above procedure, are sufficient to expand all the partial-trades.
		
		\subsection{Expanding the partial trades for part $C_1$, $D_1$, and $D_2$ to trades}\label{tradec1d1}

		As mentioned in section \ref{sec::c1andd1}, we partition part $C_1$ into $\frac{t-s-4}{2}$ column groups where the $(i)$th group ($1\leq i\leq \frac{t-s-4}{2}$) includes all the cells with entry $s+2i-1$ and $s+2i$. We apply the same grouping method to $D_1$ so we get $\frac{t-s-4}{2}$ row groups. Then, we choose the first diagonal according to labels in part $A$ whose cells are not used and partition it into $\frac{t-s-4}{2}$ dual cells such that dual cell $1\leq i\leq \frac{t-s-4}{2}$ consists $2i-1$th and $2i$th cells from the top of the diagonal. For each $i$, we consider two cells from part $C_1$ from the $i$th column group and two cells from part $D_1$ from the $i$th row group such that the union of the four cells and the $i$th dual cell in the mentioned partition form a trade of type $1B$.
		Trivially, each of these partial-trade groups in part $D_1$ can be expanded only by one diagonal in part $A$.
		
		For part $C_1$ we start from first unused dual cell in part $A$ whose union with one partial-trade of column group $1$ in part $C_1$ forms a trade. In an iterative procedure, We identify the next trades by considering the union of the next dual cell and the corresponding partial-trade in part $C_1$  until we reach the dual cell whose first cell is in the same row with the first matched pair. Then, we consider second and third cells of the union of the next two dual cells as a dual cell and expand the corresponding partial trade to trade in part $C_1$. Then, we continue the procedure of identifying trades as before until we can cover all cells in column group $1$ in $C_1$. We do the same for other column groups until we cover all cells in part $C_1$.
		%with trades then by considering next dual cell of present one and its corresponding partial-trades in part $2$ of $C$ as a trade and continue in this way we can cover all cells in part $C$ with trades.
		
		In order to expand subtrades of part $D_2$, we can easily verify that with each 2 diagonals in part $A$ we can expand subtrades of part $D_2$ to trades of type $1C$.
		
		%partial-trades introduced in page 10 can uniquely expand to trades.
		\section{Main Result}
		Now we explore the sufficient conditions under which the proposed decomposition algorithm will terminate. For this purpose, we count the number of uncovered cells in part $A$. This is simpler if the process of algorithm is divided into small steps as follows. Steps are listed in the order of their execution:
		
		\begin{enumerate}
			\item First note that the number of cells in 3 first columns and two last columns of part A are 5s, so from now we do not count them twice. Please note that throughout the algorithm description we avoid using the cells in those mentioned columns, so we can use them for specific purposes.
			\item Even cells of $B_1$: For each diagonal of part $A$ used in this step half of their cells are used in expanding partial-trades.
			Moreover, we use two sequence of consecutive diagonals each sequence crosses one row at most once, since we just have two partial trades for even cells of part $B_1$ in each row. For each sequence of consecutive diagonals, we have at most $\frac{s-1}{2}$ unused cells, totaling $s-1$ number of unused cells.

			%Additionally, in each row of the table we use at most 2 cells in this procedure, since we have at most two partial trades in each row including even cells of part $B_1$. As such, in the final sum, the number of unused cells in this step is at most $s+1$. We should take into account that in this step, we use two sequences of consecutive sequence of cells, according to labeling, each of them covers exactly one cell in each row from $1$ to $s$ of the table. 
			Also, the first cell in these two consecutive diagonals is the first cell of diagonals in part $A$. As such, when we encounter them in the process of expanding partial-trades to trades, we have to ignore at most $r-3$ cells to the next unused diagonal. Please note we might ignore a fewer number of cells; for example, when we are covering parts $D_1$, we ignore at most 2 cells to reach the first unused dual cell.
			\item Odd cells of $B_1$: At the beginning of this step, we ignore at most one cell in order to reach the smallest unused even dual cell.
			Throughout expanding partial trades covering $s$ cells of part $B_1$, at most $r$ cells are ignored to reach the next unused dual cell at the same pair of rows of these partial trades.
			\item $C_2$: From Section \ref{covC_2andD_2}  it can be concluded that in each row of the latin representation, zero or two cells of part $C_2$ remain uncovered. All the partial-trades in this part were expanded by trades of type $1A$. Moreover, we require less than $s$ cells to cover all partial-trades of part $C_2$, since there are at most 1 partial-trade for each row of part $C_2$. As such, We ignore at most $s$ cells, because there are rows in part $C_2$ whose partial trades have been covered in the procedure of covering partial-trades of part $B_1$.
			\item $B_2$: At the beginning of this step, we keep moving to the next dual cell until we reach the smallest unused dual cell in the first two rows of the table, so we ignore at most $s$ cells to start this step. Moreover, we ignore one cell whenever we pass through all rows of the table. As such, at most $s-r$ cells are ignored after this step initiated.
			Hence, at most $2s-r$ cells in total was ignored in this step.
			\item $C_1$: At the beginning of this step, we keep moving to next dual cell until we reach the first unused diagonal. Hence, we ignore at most $r$ cells to reach the next diagonal. However, there are at most five diagonals whose cells in two first columns of part $A$ are used before. As such, we ignore at most $r+5r = 6r$ cells to initiate this step. 
			%Note that we may encounter in step 8 some of the five ignored diagonals which were counted in this step, so we will not recount them in the next step. 
			During the step, at most $\frac{s-r}{2}$ cells were ignored whilst we formed the type $1B$ trades in parts $C_1$ and $D_1$. Hence, at most $\frac{s+11r}{2}$ cells overall was ignored in this step.
			\item $D_1$ and $D_2$: At the beginning of this step, we keep moving to the next dual cell until we reach the first unused diagonal. As a result, we ignore at most $r$ cells to initiate this step.\\
		\end{enumerate}
		%please note that, in each diagonal of part $A$, we do not use the three cells in the first three columns. However, we will not use just one cell from each diagonal. Therefore, we do not count them.\\
		
		Now we compute the number of ignored cells counted in the 7 steps above:
		\begin{equation*}\label{eq:ignored}
			\begin{split}
				&5s + (s+1+r-3) + (1+r) + s + (2s - r) + \frac{s+11r}{2} + r\\
				&\leq 9.5s + 7.5r - 2 \leq 9(s+r)
			\end{split}
		\end{equation*}
		For each trade, number of cells used outside of part $A$ are twice the number of cells used in that trade inside part $A$. In order for the algorithm to have enough cells to cover all cells in parts $B,C$, and $D$, the number of cells outside of part $A$ should be at most twice the number of cells inside part $A$ excluding the cells ignored throughout the algorithm:
		\begin{equation*}
			s(t-r) + r(t-s) \le 2(rs - 9(r+s)) \Rightarrow 
		\end{equation*}
		Which can be rewritten as:
		\begin{equation*}\label{eq:final}
			t(r+s) \le 4rs - 18(r+s) \Rightarrow t+18 \le \frac{4rs}{r+s}
		\end{equation*}
		According to the above inequality, the sufficient condition for validity of our algorithm is stronger than the necessary conditions for the problem only by a constant number, $18$, and condition $t \neq s+2 $. The following Remark summarizes our results.
		
		\begin{remark}
			\label{main_th}
			There exists a decomposition of $K_{5r,5s,5t}$ into 5-cycles if the following conditions hold:
			\begin{itemize}
				\item $t+18 \leq \frac{4rs}{r+s}$
				\item $t \neq s+2$
				\item $r$, $s$ and $t$ have the same parity.
			\end{itemize}
		\end{remark}
		%\begin{proof}
		%	Previous sections and lemmas give the proof of this theorem.
		%\end{proof}
		
		We use our method for decomposing $K_{15,17,21}$ into 5-cycles and triangles in appendix \ref{sec::appendix}.
		\section{Conclusions and Open Problems}
		We proposed a method to solve one of the remaining cases of Mahmoodian and Mirzakhani \cite{MR1366852} conjecture. This case was stated in Theorem \ref{main_th}, although there exists a constant gap in the inequality introduced by our method and the inequality of the conjecture. The remaining case to prove this conjecture is when all parts have odd sizes and at least one of them is not a multiple of $5$.
		
		\section*{Appendices}\label{sec::appendix}
		%%%%%%%%%%%%%%%%%%%%%%%%%%%%%%%%
		%%%%%%%%%%%%%%%%%%%%%%%%%%%%%%%%
		%%%%%%%%%%%%%%%%%%%%%%%%%%%%%%%%
		
		\begin{figure}[H]
			\centering
			\begin{tikzpicture}[scale=.432]
			\begin{scope}
			\tiny
			\draw (0,0) -- (31.5,0);\draw (0,1.5) -- (31.5,1.5);\draw (0,3) -- (31.5,3);\draw (0,4.5) -- (31.5,4.5);\draw (0,6) -- (31.5,6);\draw (0,7.5) -- (31.5,7.5);\draw (0,9) -- (31.5,9);\draw (0,10.5) -- (31.5,10.5);\draw (0,12) -- (31.5,12);\draw (0,13.5) -- (31.5,13.5);\draw (0,15) -- (31.5,15);\draw (0,16.5) -- (31.5,16.5);\draw (0,18) -- (31.5,18);\draw (0,19.5) -- (31.5,19.5);\draw (0,21) -- (31.5,21);\draw (0,22.5) -- (31.5,22.5);\draw (0,24) -- (31.5,24);\draw (0,25.5) -- (31.5,25.5);
			\draw (0,0) -- (22.5,0);\draw (0,-1.5) -- (22.5,-1.5);\draw (0,-3) -- (22.5,-3);\draw (0,-4.5) -- (22.5,-4.5);\draw (0,-6) -- (22.5,-6);
			\draw (0,-6) -- (0,25.5);\draw (1.5,-6) -- (1.5,25.5);\draw (3,-6) -- (3,25.5);\draw (4.5,-6) -- (4.5,25.5);\draw (6,-6) -- (6,25.5);\draw (7.5,-6) -- (7.5,25.5);\draw (9,-6) -- (9,25.5);\draw (10.5,-6) -- (10.5,25.5);\draw (12,-6) -- (12,25.5);\draw (13.5,-6) -- (13.5,25.5);\draw (15,-6) -- (15,25.5);\draw (16.5,-6) -- (16.5,25.5);\draw (18,-6) -- (18,25.5);\draw (19.5,-6) -- (19.5,25.5);\draw (21,-6) -- (21,25.5);\draw (22.5,-6) -- (22.5,25.5);
			\draw (22.5,0) -- (22.5,25.5);\draw (24,0) -- (24,25.5);\draw (25.5,0) -- (25.5,25.5);\draw (27,0) -- (27,25.5);\draw (28.5,0) -- (28.5,25.5);\draw (30,0) -- (30,25.5);\draw (31.5,0) -- (31.5,25.5);
			\draw[very thick] (0,0) rectangle (22.5,25.5);
			\draw[very thick] (0,0) rectangle (22.5,-6);
			\draw[very thick] (22.5,0) rectangle (25.5,25.5);
			\draw[very thick] (25.5,0) rectangle (31.5,25.5);
			\node at (0.75,25.15) {1};\node at (0.75,23.65) {2};\node at (0.75,22.15) {3};\node at (0.75,20.65) {4};\node at (0.75,19.15) {5};\node at (0.75,17.65) {6};\node at (0.75,16.15) {7};\node at (0.75,14.65) {8};\node at (0.75,13.15) {9};\node at (0.75,11.65) {10};\node at (0.75,10.15) {11};\node at (0.75,8.65) {12};\node at (0.75,7.15) {13};\node at (0.75,5.65) {14};\node at (0.75,4.15) {15};\node at (0.75,2.65) {16};\node at (0.75,1.15) {17};
			\node at (2.25,25.15) {2};\node at (2.25,23.65) {3};\node at (2.25,22.15) {4};\node at (2.25,20.65) {5};\node at (2.25,19.15) {6};\node at (2.25,17.65) {7};\node at (2.25,16.15) {8};\node at (2.25,14.65) {9};\node at (2.25,13.15) {10};\node at (2.25,11.65) {11};\node at (2.25,10.15) {12};\node at (2.25,8.65) {13};\node at (2.25,7.15) {14};\node at (2.25,5.65) {15};\node at (2.25,4.15) {16};\node at (2.25,2.65) {17};\node at (2.25,1.15) {1};
			\node at (3.75,25.15) {3};\node at (3.75,23.65) {4};\node at (3.75,22.15) {5};\node at (3.75,20.65) {6};\node at (3.75,19.15) {7};\node at (3.75,17.65) {8};\node at (3.75,16.15) {9};\node at (3.75,14.65) {10};\node at (3.75,13.15) {11};\node at (3.75,11.65) {12};\node at (3.75,10.15) {13};\node at (3.75,8.65) {14};\node at (3.75,7.15) {15};\node at (3.75,5.65) {16};\node at (3.75,4.15) {17};\node at (3.75,2.65) {1};\node at (3.75,1.15) {2};
			\node at (5.25,25.15) {4};\node at (5.25,23.65) {5};\node at (5.25,22.15) {6};\node at (5.25,20.65) {7};\node at (5.25,19.15) {8};\node at (5.25,17.65) {9};\node at (5.25,16.15) {10};\node at (5.25,14.65) {11};\node at (5.25,13.15) {12};\node at (5.25,11.65) {13};\node at (5.25,10.15) {14};\node at (5.25,8.65) {15};\node at (5.25,7.15) {16};\node at (5.25,5.65) {17};\node at (5.25,4.15) {1};\node at (5.25,2.65) {2};\node at (5.25,1.15) {3};
			\node at (6.75,25.15) {5};\node at (6.75,23.65) {6};\node at (6.75,22.15) {7};\node at (6.75,20.65) {8};\node at (6.75,19.15) {9};\node at (6.75,17.65) {10};\node at (6.75,16.15) {11};\node at (6.75,14.65) {12};\node at (6.75,13.15) {13};\node at (6.75,11.65) {14};\node at (6.75,10.15) {15};\node at (6.75,8.65) {16};\node at (6.75,7.15) {17};\node at (6.75,5.65) {1};\node at (6.75,4.15) {2};\node at (6.75,2.65) {3};\node at (6.75,1.15) {4};
			\node at (8.25,25.15) {6};\node at (8.25,23.65) {7};\node at (8.25,22.15) {8};\node at (8.25,20.65) {9};\node at (8.25,19.15) {10};\node at (8.25,17.65) {11};\node at (8.25,16.15) {12};\node at (8.25,14.65) {13};\node at (8.25,13.15) {14};\node at (8.25,11.65) {15};\node at (8.25,10.15) {16};\node at (8.25,8.65) {17};\node at (8.25,7.15) {1};\node at (8.25,5.65) {2};\node at (8.25,4.15) {3};\node at (8.25,2.65) {4};\node at (8.25,1.15) {5};
			\node at (9.75,25.15) {7};\node at (9.75,23.65) {8};\node at (9.75,22.15) {9};\node at (9.75,20.65) {10};\node at (9.75,19.15) {11};\node at (9.75,17.65) {12};\node at (9.75,16.15) {13};\node at (9.75,14.65) {14};\node at (9.75,13.15) {15};\node at (9.75,11.65) {16};\node at (9.75,10.15) {17};\node at (9.75,8.65) {1};\node at (9.75,7.15) {2};\node at (9.75,5.65) {3};\node at (9.75,4.15) {4};\node at (9.75,2.65) {5};\node at (9.75,1.15) {6};
			\node at (11.25,25.15) {8};\node at (11.25,23.65) {9};\node at (11.25,22.15) {10};\node at (11.25,20.65) {11};\node at (11.25,19.15) {12};\node at (11.25,17.65) {13};\node at (11.25,16.15) {14};\node at (11.25,14.65) {15};\node at (11.25,13.15) {16};\node at (11.25,11.65) {17};\node at (11.25,10.15) {1};\node at (11.25,8.65) {2};\node at (11.25,7.15) {3};\node at (11.25,5.65) {4};\node at (11.25,4.15) {5};\node at (11.25,2.65) {6};\node at (11.25,1.15) {7};
			\node at (12.75,25.15) {9};\node at (12.75,23.65) {10};\node at (12.75,22.15) {11};\node at (12.75,20.65) {12};\node at (12.75,19.15) {13};\node at (12.75,17.65) {14};\node at (12.75,16.15) {15};\node at (12.75,14.65) {16};\node at (12.75,13.15) {17};\node at (12.75,11.65) {1};\node at (12.75,10.15) {2};\node at (12.75,8.65) {3};\node at (12.75,7.15) {4};\node at (12.75,5.65) {5};\node at (12.75,4.15) {6};\node at (12.75,2.65) {7};\node at (12.75,1.15) {8};
			\node at (14.25,25.15) {10};\node at (14.25,23.65) {11};\node at (14.25,22.15) {12};\node at (14.25,20.65) {13};\node at (14.25,19.15) {14};\node at (14.25,17.65) {15};\node at (14.25,16.15) {16};\node at (14.25,14.65) {17};\node at (14.25,13.15) {1};\node at (14.25,11.65) {2};\node at (14.25,10.15) {3};\node at (14.25,8.65) {4};\node at (14.25,7.15) {5};\node at (14.25,5.65) {6};\node at (14.25,4.15) {7};\node at (14.25,2.65) {8};\node at (14.25,1.15) {9};
			\node at (15.75,25.15) {11};\node at (15.75,23.65) {12};\node at (15.75,22.15) {13};\node at (15.75,20.65) {14};\node at (15.75,19.15) {15};\node at (15.75,17.65) {16};\node at (15.75,16.15) {17};\node at (15.75,14.65) {1};\node at (15.75,13.15) {2};\node at (15.75,11.65) {3};\node at (15.75,10.15) {4};\node at (15.75,8.65) {5};\node at (15.75,7.15) {6};\node at (15.75,5.65) {7};\node at (15.75,4.15) {8};\node at (15.75,2.65) {9};\node at (15.75,1.15) {10};
			\node at (17.25,25.15) {12};\node at (17.25,23.65) {13};\node at (17.25,22.15) {14};\node at (17.25,20.65) {15};\node at (17.25,19.15) {16};\node at (17.25,17.65) {17};\node at (17.25,16.15) {1};\node at (17.25,14.65) {2};\node at (17.25,13.15) {3};\node at (17.25,11.65) {4};\node at (17.25,10.15) {5};\node at (17.25,8.65) {6};\node at (17.25,7.15) {7};\node at (17.25,5.65) {8};\node at (17.25,4.15) {9};\node at (17.25,2.65) {10};\node at (17.25,1.15) {11};
			\node at (18.75,25.15) {13};\node at (18.75,23.65) {14};\node at (18.75,22.15) {15};\node at (18.75,20.65) {16};\node at (18.75,19.15) {17};\node at (18.75,17.65) {1};\node at (18.75,16.15) {2};\node at (18.75,14.65) {3};\node at (18.75,13.15) {4};\node at (18.75,11.65) {5};\node at (18.75,10.15) {6};\node at (18.75,8.65) {7};\node at (18.75,7.15) {8};\node at (18.75,5.65) {9};\node at (18.75,4.15) {10};\node at (18.75,2.65) {11};\node at (18.75,1.15) {12};
			\node at (20.25,25.15) {14};\node at (20.25,23.65) {15};\node at (20.25,22.15) {16};\node at (20.25,20.65) {17};\node at (20.25,19.15) {1};\node at (20.25,17.65) {2};\node at (20.25,16.15) {3};\node at (20.25,14.65) {4};\node at (20.25,13.15) {5};\node at (20.25,11.65) {6};\node at (20.25,10.15) {7};\node at (20.25,8.65) {8};\node at (20.25,7.15) {9};\node at (20.25,5.65) {10};\node at (20.25,4.15) {11};\node at (20.25,2.65) {12};\node at (20.25,1.15) {13};
			\node at (21.75,25.15) {15};\node at (21.75,23.65) {16};\node at (21.75,22.15) {17};\node at (21.75,20.65) {1};\node at (21.75,19.15) {2};\node at (21.75,17.65) {3};\node at (21.75,16.15) {4};\node at (21.75,14.65) {5};\node at (21.75,13.15) {6};\node at (21.75,11.65) {7};\node at (21.75,10.15) {8};\node at (21.75,8.65) {9};\node at (21.75,7.15) {10};\node at (21.75,5.65) {11};\node at (21.75,4.15) {12};\node at (21.75,2.65) {13};\node at (21.75,1.15) {14};
			\node at (23.25,25.15) {16};\node at (23.25,23.65) {17};\node at (23.25,22.15) {1};\node at (23.25,20.65) {2};\node at (23.25,19.15) {3};\node at (23.25,17.65) {4};\node at (23.25,16.15) {5};\node at (23.25,14.65) {6};\node at (23.25,13.15) {7};\node at (23.25,11.65) {8};\node at (23.25,10.15) {9};\node at (23.25,8.65) {10};\node at (23.25,7.15) {11};\node at (23.25,5.65) {12};\node at (23.25,4.15) {13};\node at (23.25,2.65) {14};\node at (23.25,1.15) {15};
			\node at (24.75,25.15) {17};\node at (24.75,23.65) {1};\node at (24.75,22.15) {2};\node at (24.75,20.65) {3};\node at (24.75,19.15) {4};\node at (24.75,17.65) {5};\node at (24.75,16.15) {6};\node at (24.75,14.65) {7};\node at (24.75,13.15) {8};\node at (24.75,11.65) {9};\node at (24.75,10.15) {10};\node at (24.75,8.65) {11};\node at (24.75,7.15) {12};\node at (24.75,5.65) {13};\node at (24.75,4.15) {14};\node at (24.75,2.65) {15};\node at (24.75,1.15) {16};
			\node at (26.25,25.15) {18};\node at (27.75,25.15) {19};\node at (29.25,25.15) {20};\node at (30.75,25.15) {21};
			\node at (26.25,23.65) {18};\node at (27.75,23.65) {19};\node at (29.25,23.65) {20};\node at (30.75,23.65) {21};
			\node at (26.25,22.15) {18};\node at (27.75,22.15) {19};\node at (29.25,22.15) {20};\node at (30.75,22.15) {21};
			\node at (26.25,20.65) {18};\node at (27.75,20.65) {19};\node at (29.25,20.65) {20};\node at (30.75,20.65) {21};
			\node at (26.25,19.15) {18};\node at (27.75,19.15) {19};\node at (29.25,19.15) {20};\node at (30.75,19.15) {21};
			\node at (26.25,17.65) {18};\node at (27.75,17.65) {19};\node at (29.25,17.65) {20};\node at (30.75,17.65) {21};
			\node at (26.25,16.15) {18};\node at (27.75,16.15) {19};\node at (29.25,16.15) {20};\node at (30.75,16.15) {21};
			\node at (26.25,14.65) {18};\node at (27.75,14.65) {19};\node at (29.25,14.65) {20};\node at (30.75,14.65) {21};
			\node at (26.25,13.15) {18};\node at (27.75,13.15) {19};\node at (29.25,13.15) {20};\node at (30.75,13.15) {21};
			\node at (26.25,11.65) {18};\node at (27.75,11.65) {19};\node at (29.25,11.65) {20};\node at (30.75,11.65) {21};
			\node at (26.25,10.15) {18};\node at (27.75,10.15) {19};\node at (29.25,10.15) {20};\node at (30.75,10.15) {21};
			\node at (26.25,8.65) {18};\node at (27.75,8.65) {19};\node at (29.25,8.65) {20};\node at (30.75,8.65) {21};
			\node at (26.25,7.15) {18};\node at (27.75,7.15) {19};\node at (29.25,7.15) {20};\node at (30.75,7.15) {21};
			\node at (26.25,5.65) {18};\node at (27.75,5.65) {19};\node at (29.25,5.65) {20};\node at (30.75,5.65) {21};
			\node at (26.25,4.15) {18};\node at (27.75,4.15) {19};\node at (29.25,4.15) {20};\node at (30.75,4.15) {21};
			\node at (26.25,2.65) {18};\node at (27.75,2.65) {19};\node at (29.25,2.65) {20};\node at (30.75,2.65) {21};
			\node at (26.25,1.15) {18};\node at (27.75,1.15) {19};\node at (29.25,1.15) {20};\node at (30.75,1.15) {21};
			\node at (0.75,-0.35) {18};\node at (0.75,-1.85) {19};\node at (0.75,-3.35) {20};\node at (0.75,-4.85) {21};
			\node at (2.25,-0.35) {18};\node at (2.25,-1.85) {19};\node at (2.25,-3.35) {20};\node at (2.25,-4.85) {21};
			\node at (3.75,-0.35) {18};\node at (3.75,-1.85) {19};\node at (3.75,-3.35) {20};\node at (3.75,-4.85) {21};
			\node at (5.25,-0.35) {18};\node at (5.25,-1.85) {19};\node at (5.25,-3.35) {20};\node at (5.25,-4.85) {21};
			\node at (6.75,-0.35) {18};\node at (6.75,-1.85) {19};\node at (6.75,-3.35) {20};\node at (6.75,-4.85) {21};
			\node at (8.25,-0.35) {18};\node at (8.25,-1.85) {19};\node at (8.25,-3.35) {20};\node at (8.25,-4.85) {21};
			\node at (9.75,-0.35) {18};\node at (9.75,-1.85) {19};\node at (9.75,-3.35) {20};\node at (9.75,-4.85) {21};
			\node at (11.25,-0.35) {18};\node at (11.25,-1.85) {19};\node at (11.25,-3.35) {20};\node at (11.25,-4.85) {21};
			\node at (12.75,-0.35) {18};\node at (12.75,-1.85) {19};\node at (12.75,-3.35) {20};\node at (12.75,-4.85) {21};
			\node at (14.25,-0.35) {18};\node at (14.25,-1.85) {19};\node at (14.25,-3.35) {20};\node at (14.25,-4.85) {21};
			\node at (15.75,-0.35) {18};\node at (15.75,-1.85) {19};\node at (15.75,-3.35) {20};\node at (15.75,-4.85) {21};
			\node at (17.25,-0.35) {18};\node at (17.25,-1.85) {19};\node at (17.25,-3.35) {20};\node at (17.25,-4.85) {21};
			\node at (18.75,-0.35) {18};\node at (18.75,-1.85) {19};\node at (18.75,-3.35) {20};\node at (18.75,-4.85) {21};
			\node at (20.25,-0.35) {18};\node at (20.25,-1.85) {19};\node at (20.25,-3.35) {20};\node at (20.25,-4.85) {21};
			\node at (21.75,-0.35) {18};\node at (21.75,-1.85) {19};\node at (21.75,-3.35) {20};\node at (21.75,-4.85) {21};
			\node at (2.25,24.35){\color{green}*23};\node at (9.75,24.35){\color{yellow}*17};\node at (12.75,24.35){\color{purple}*30};\node at (18.75,24.35){\color{orange}*8};\node at (20.25,24.35){\color{blue}*34};\node at (23.25,24.35){\color{orange}*8};\node at (24.75,24.35){\color{yellow}*17};\node at (26.25,24.35){\color{yellow}*17};\node at (27.75,24.35){\color{green}*24};\node at (29.25,24.35){\color{green}*24};\node at (30.75,24.35){\color{orange}*8};\node at (0.75,22.85){\color{green}*24};\node at (2.25,22.85){\color{green}*41};\node at (8.25,22.85){\color{yellow}*17};\node at (11.25,22.85){\color{purple}*30};\node at (15.75,22.85){\color{cyan}*9};\node at (18.75,22.85){\color{blue}*35};\node at (23.25,22.85){\color{yellow}*17};\node at (24.75,22.85){\color{cyan}*9};\node at (26.25,22.85){\color{yellow}*17};\node at (27.75,22.85){\color{cyan}*9};\node at (29.25,22.85){\color{green}*41};\node at (30.75,22.85){\color{green}*41};\node at (2.25,21.35){\color{green}*25};\node at (9.75,21.35){\color{purple}*31};\node at (14.25,21.35){\color{cyan}*9};\node at (17.25,21.35){\color{blue}*35};\node at (21.75,21.35){\color{orange}*1};\node at (23.25,21.35){\color{cyan}*9};\node at (24.75,21.35){\color{orange}*1};\node at (26.25,21.35){\color{green}*25};\node at (27.75,21.35){\color{cyan}*9};\node at (29.25,21.35){\color{orange}*1};\node at (30.75,21.35){\color{green}*25};\node at (0.75,19.85){\color{green}*25};\node at (2.25,19.85){\color{green}*41};\node at (8.25,19.85){\color{purple}*31};\node at (12.75,19.85){\color{cyan}*10};\node at (15.75,19.85){\color{blue}*36};\node at (21.75,19.85){\color{orange}*1};\node at (23.25,19.85){\color{orange}*1};\node at (24.75,19.85){\color{cyan}*10};\node at (26.25,19.85){\color{green}*41};\node at (27.75,19.85){\color{cyan}*10};\node at (29.25,19.85){\color{orange}*1};\node at (30.75,19.85){\color{green}*41};\node at (2.25,18.35){\color{green}*41};\node at (6.75,18.35){\color{purple}*32};\node at (11.25,18.35){\color{cyan}*10};\node at (14.25,18.35){\color{blue}*36};\node at (18.75,18.35){\color{orange}*2};\node at (23.25,18.35){\color{cyan}*10};\node at (24.75,18.35){\color{orange}*2};\node at (26.25,18.35){\color{green}*41};\node at (27.75,18.35){\color{cyan}*10};\node at (29.25,18.35){\color{green}*41};\node at (30.75,18.35){\color{orange}*2};\node at (5.25,16.85){\color{purple}*32};\node at (9.75,16.85){\color{cyan}*11};\node at (12.75,16.85){\color{blue}*37};\node at (18.75,16.85){\color{orange}*2};\node at (20.25,16.85){\color{green}*18};\node at (23.25,16.85){\color{orange}*2};\node at (24.75,16.85){\color{cyan}*11};\node at (26.25,16.85){\color{green}*18};\node at (27.75,16.85){\color{cyan}*11};\node at (29.25,16.85){\color{green}*18};\node at (30.75,16.85){\color{orange}*2};\node at (3.75,15.35){\color{purple}*33};\node at (8.25,15.35){\color{cyan}*11};\node at (11.25,15.35){\color{blue}*37};\node at (15.75,15.35){\color{orange}*3};\node at (18.75,15.35){\color{green}*18};\node at (23.25,15.35){\color{cyan}*11};\node at (24.75,15.35){\color{orange}*3};\node at (26.25,15.35){\color{green}*18};\node at (27.75,15.35){\color{cyan}*11};\node at (29.25,15.35){\color{green}*18};\node at (30.75,15.35){\color{orange}*3};\node at (2.25,13.85){\color{purple}*33};\node at (6.75,13.85){\color{cyan}*12};\node at (9.75,13.85){\color{blue}*38};\node at (15.75,13.85){\color{orange}*3};\node at (17.25,13.85){\color{green}*19};\node at (23.25,13.85){\color{orange}*3};\node at (24.75,13.85){\color{cyan}*12};\node at (26.25,13.85){\color{green}*19};\node at (27.75,13.85){\color{cyan}*12};\node at (29.25,13.85){\color{green}*19};\node at (30.75,13.85){\color{orange}*3};\node at (5.25,12.35){\color{cyan}*12};\node at (8.25,12.35){\color{blue}*38};\node at (12.75,12.35){\color{orange}*4};\node at (15.75,12.35){\color{green}*19};\node at (23.25,12.35){\color{cyan}*12};\node at (24.75,12.35){\color{orange}*4};\node at (26.25,12.35){\color{green}*19};\node at (27.75,12.35){\color{cyan}*12};\node at (29.25,12.35){\color{green}*19};\node at (30.75,12.35){\color{orange}*4};\node at (6.75,10.85){\color{blue}*39};\node at (12.75,10.85){\color{orange}*4};\node at (14.25,10.85){\color{green}*20};\node at (21.75,10.85){\color{cyan}*13};\node at (23.25,10.85){\color{orange}*4};\node at (24.75,10.85){\color{cyan}*13};\node at (26.25,10.85){\color{green}*20};\node at (27.75,10.85){\color{cyan}*13};\node at (29.25,10.85){\color{green}*20};\node at (30.75,10.85){\color{orange}*4};\node at (5.25,9.35){\color{blue}*39};\node at (9.75,9.35){\color{orange}*5};\node at (12.75,9.35){\color{green}*20};\node at (20.25,9.35){\color{cyan}*13};\node at (23.25,9.35){\color{cyan}*13};\node at (24.75,9.35){\color{orange}*5};\node at (26.25,9.35){\color{green}*20};\node at (27.75,9.35){\color{cyan}*13};\node at (29.25,9.35){\color{green}*20};\node at (30.75,9.35){\color{orange}*5};\node at (3.75,7.85){\color{blue}*40};\node at (9.75,7.85){\color{orange}*5};\node at (11.25,7.85){\color{green}*21};\node at (18.75,7.85){\color{cyan}*14};\node at (21.75,7.85){\color{purple}*27};\node at (23.25,7.85){\color{orange}*5};\node at (24.75,7.85){\color{cyan}*14};\node at (26.25,7.85){\color{green}*21};\node at (27.75,7.85){\color{cyan}*14};\node at (29.25,7.85){\color{green}*21};\node at (30.75,7.85){\color{orange}*5};\node at (2.25,6.35){\color{blue}*40};\node at (6.75,6.35){\color{orange}*6};\node at (9.75,6.35){\color{green}*21};\node at (17.25,6.35){\color{cyan}*14};\node at (20.25,6.35){\color{purple}*27};\node at (23.25,6.35){\color{cyan}*14};\node at (24.75,6.35){\color{orange}*6};\node at (26.25,6.35){\color{green}*21};\node at (27.75,6.35){\color{cyan}*13};\node at (29.25,6.35){\color{green}*21};\node at (30.75,6.35){\color{orange}*6};\node at (6.75,4.85){\color{orange}*6};\node at (8.25,4.85){\color{green}*22};\node at (15.75,4.85){\color{cyan}*15};\node at (18.75,4.85){\color{purple}*28};\node at (23.25,4.85){\color{orange}*6};\node at (24.75,4.85){\color{cyan}*15};\node at (26.25,4.85){\color{green}*22};\node at (27.75,4.85){\color{cyan}*15};\node at (29.25,4.85){\color{green}*22};\node at (30.75,4.85){\color{orange}*6};\node at (3.75,3.35){\color{orange}*7};\node at (6.75,3.35){\color{green}*22};\node at (14.25,3.35){\color{cyan}*15};\node at (17.25,3.35){\color{purple}*28};\node at (23.25,3.35){\color{cyan}*15};\node at (24.75,3.35){\color{orange}*7};\node at (26.25,3.35){\color{green}*22};\node at (27.75,3.35){\color{cyan}*15};\node at (29.25,3.35){\color{green}*22};\node at (30.75,3.35){\color{orange}*7};\node at (3.75,1.85){\color{orange}*7};\node at (5.25,1.85){\color{green}*23};\node at (12.75,1.85){\color{cyan}*16};\node at (15.75,1.85){\color{purple}*29};\node at (23.25,1.85){\color{orange}*7};\node at (24.75,1.85){\color{cyan}*16};\node at (26.25,1.85){\color{green}*23};\node at (27.75,1.85){\color{cyan}*16};\node at (29.25,1.85){\color{green}*23};\node at (30.75,1.85){\color{orange}*7};\node at (3.75,0.35){\color{green}*23};\node at (11.25,0.35){\color{cyan}*16};\node at (14.25,0.35){\color{purple}*29};\node at (18.75,0.35){\color{orange}*8};\node at (21.75,0.35){\color{blue}*34};\node at (23.25,0.35){\color{cyan}*16};\node at (24.75,0.35){\color{orange}*8};\node at (26.25,0.35){\color{green}*23};\node at (27.75,0.35){\color{cyan}*16};\node at (29.25,0.35){\color{green}*23};\node at (30.75,0.35){\color{orange}*8};\node at (0.75,-1.15){\color{green}*25};\node at (2.25,-1.15){\color{purple}*33};\node at (3.75,-1.15){\color{purple}*33};\node at (5.25,-1.15){\color{purple}*32};\node at (6.75,-1.15){\color{purple}*32};\node at (8.25,-1.15){\color{purple}*31};\node at (9.75,-1.15){\color{purple}*31};\node at (11.25,-1.15){\color{purple}*30};\node at (12.75,-1.15){\color{purple}*30};\node at (14.25,-1.15){\color{purple}*29};\node at (15.75,-1.15){\color{purple}*29};\node at (17.25,-1.15){\color{purple}*28};\node at (18.75,-1.15){\color{purple}*28};\node at (20.25,-1.15){\color{purple}*27};\node at (21.75,-1.15){\color{purple}*27};\node at (0.75,-2.65){\color{green}*24};\node at (2.25,-2.65){\color{blue}*40};\node at (3.75,-2.65){\color{blue}*40};\node at (5.25,-2.65){\color{blue}*39};\node at (6.75,-2.65){\color{blue}*39};\node at (8.25,-2.65){\color{blue}*38};\node at (9.75,-2.65){\color{blue}*38};\node at (11.25,-2.65){\color{blue}*37};\node at (12.75,-2.65){\color{blue}*37};\node at (14.25,-2.65){\color{blue}*36};\node at (15.75,-2.65){\color{blue}*36};\node at (17.25,-2.65){\color{blue}*35};\node at (18.75,-2.65){\color{blue}*35};\node at (20.25,-2.65){\color{blue}*34};\node at (21.75,-2.65){\color{blue}*34};\node at (0.75,-4.15){\color{green}*24};\node at (2.25,-4.15){\color{blue}*40};\node at (3.75,-4.15){\color{blue}*40};\node at (5.25,-4.15){\color{blue}*39};\node at (6.75,-4.15){\color{blue}*39};\node at (8.25,-4.15){\color{blue}*38};\node at (9.75,-4.15){\color{blue}*38};\node at (11.25,-4.15){\color{blue}*37};\node at (12.75,-4.15){\color{blue}*37};\node at (14.25,-4.15){\color{blue}*36};\node at (15.75,-4.15){\color{blue}*36};\node at (17.25,-4.15){\color{blue}*35};\node at (18.75,-4.15){\color{blue}*35};\node at (20.25,-4.15){\color{blue}*34};\node at (21.75,-4.15){\color{blue}*34};\node at (0.75,-5.65){\color{green}*25};\node at (2.25,-5.65){\color{purple}*33};\node at (3.75,-5.65){\color{purple}*33};\node at (5.25,-5.65){\color{purple}*32};\node at (6.75,-5.65){\color{purple}*32};\node at (8.25,-5.65){\color{purple}*31};\node at (9.75,-5.65){\color{purple}*31};\node at (11.25,-5.65){\color{purple}*30};\node at (12.75,-5.65){\color{purple}*30};\node at (14.25,-5.65){\color{purple}*29};\node at (15.75,-5.65){\color{purple}*29};\node at (17.25,-5.65){\color{purple}*28};\node at (18.75,-5.65){\color{purple}*28};\node at (20.25,-5.65){\color{purple}*27};\node at (21.75,-5.65){\color{purple}*27};
			
			\node at (26.75,-0.5){\color{orange}B1 even cells: *No.};
			\node at (26.75,-1.5){\color{cyan}B1 odd cells: *No.};
			\node at (26.75,-2.5){\color{yellow}B1 s-cells: *No.};
			\node at (26.75,-3.5){\color{green}C2: *No.};
			\node at (26.75,-4.5){\color{purple}D2 part1: *No.};
			\node at (26.75,-5.5){\color{blue}D2 part2: *No.};
			\end{scope}  
			
			\end{tikzpicture}
			\caption{A latin representation of $K_{15,17,21}$, which is obtained from our method, and the trades in it.}
			\label{fig:finally}
		\end{figure}
		
		%%%%%%%%%%%%%%%%%%%%%%%%%%%%%%%%
		%%%%%%%%%%%%%%%%%%%%%%%%%%%%%%%%
		%%%%%%%%%%%%%%%%%%%%%%%%%%%%%%%%
		
		\bibliographystyle{plain}
		
	}
\end{document}